\DeclareMathAlphabet{\mathcal}{OMS}{cmsy}{m}{n}
\newtheorem{definition}{Definition}[section]
\newtheorem{theorem}{Theorem}[section]
\newtheorem{remark}{Remark}[section]
\newtheorem{lemma}{Lemma}[section]
\newcommand{\bvarphi}{\boldsymbol{\varphi}}
\newcommand{\R}{\mathbb{R}}
\newcommand{\bu}{\mathbf{u}}
\newcommand{\bw}{\mathbf{w}}
\newcommand{\bF}{\mathbf{F}}
\newcommand{\bG}{\mathbf{G}}
\newcommand{\bH}{\mathbf{H}}
\newcommand{\bL}{\mathbf{L}}
\newcommand{\bV}{\mathbf{V}}
\newcommand{\bW}{\mathbf{W}}
\newcommand{\bx}{\mathbf{x}}
\newcommand{\bm}{\mathbf{m}}
\newcommand{\bn}{\mathbf{n}}
\newcommand{\bq}{\mathbf{q}}
\newcommand{\bv}{\mathbf{v}}
\newcommand{\diver}{\mathrm{div}\,}
\newcommand{\curl}{\mathrm{curl}\,}
\DeclareMathOperator*{\esssup}{ess\,sup}
\numberwithin{equation}{section}
\title[]
{A  priori estimates for the system modelling nonhomogeneous asymmetric fluids}
\author[A. Coronel]{An{\'\i}bal\ Coronel$^\dag$}
\author[E. Fern\'andez-Cara]{Enrique Fern\'andez-Cara$^\ddag$}
\author[M. Rojas-Medar]{Marko Rojas-Medar$^\S$}
\author[A. Tello]{Alex Tello$^\dag$}
\thanks{$^\dag$
GMA, Departamento de Ciencias B\'asicas, Facultad de Ciencias,
Universidad del B\'{\i}o-B\'{\i}o, Campus Fernando May, Chill\'{a}n, Chile.}
\email{acoronel@ubiobio.cl,alextello21@gmail.com}
\thanks{$^\ddag$
Dpto. EDAN, University of Sevilla, 
Aptdo. 1160, 41080 Sevilla, Spain}
\email{cara@us.es}
\thanks{$^\ddag$
Instituto de Alta Investigaci\'on, 
Universidad de Tarapac\'a, Casilla 7D, Arica, Chile}
\email{marko.medar@gmail.com}
\date{\today}
\keywords{A priori estimates; 
nonhomogeneous asymmetric fluids; 
micropolar fluids;  
navier-stokes system}
\begin{document}

\begin{abstract}
In this paper, we prove some a priori estimates for 
a system of partial differential equations arising in the nonstationary
flow of a nonhomogeneous incompressible asymmetric fluid
in a bounded domain with smooth boundary. 
The unknowns of the system are the velocity
field of the fluid particles,
the angular velocity of rotation of the fluid particles,
the mass density of the fluid and the pressure distribution. 
For the density functions we consider 
the application of the Helmholtz decomposition.

\end{abstract}
\maketitle

\numberwithin{equation}{section}
\newtheorem{thm}{Theorem}[section]
\newtheorem{lem}[thm]{Lemma}
\newtheorem{prop}[thm]{Proposition}
\newtheorem{cor}[thm]{Corollary}
\newtheorem{defn}{Definition}[section]
\newtheorem{conj}{Conjecture}[section]
\newtheorem{exam}{Example}[section]
\newtheorem{rem}{Remark}[section]
\allowdisplaybreaks

\section{Introduction}

\subsection{Scope}
The well known micropolar fluids or also called asymmetric fluids 
are a widely class of fluids which are relevant in 
many industrial applications and in 
several areas of science, see 
for instance \cite{antontsev_1990,boldrini_1996,boldrini_2003,braz_2014,braz_2007,
conca_2002,lukaszewicz_1990,lukaszewicz_1989,rojasmedar_2005,vitoriano_2013}.
Consequently, there is several mathematical models to describe the phenomenon.
In particular, a wide class of that models are based on the assumptions of Navier-Stokes
type with a non-symmetric Cauchy tensor, 
see \cite{lukaszewicz_1990} for details. In that case the model is a system
of differential equations for the linear momentum, the the angular momentum, the 
continuity equation and the incompressibility condition.
More precisely, let us consider a nonhomogeneous viscous incompressible asymmetric fluid 
on a bounded and regular domain $\Omega\subset\mathbb{R}^3$, 
with boundary~$\partial\Omega$  and outward unit normal vector $\bn$. 
Then, the motion of the fluid in a finite time $t\in [0,T]$,
is described by the velocity field $\bu$, the angular velocity of rotation of 
the fluid particles $\bw$, the mass density 
$\rho$ and the pressure distribution $p$,  satisfying the system
\begin{eqnarray}
&&(\rho \bu)_t+\diver(\rho \bu\otimes \bu)
	-(\mu +\mu_r)\Delta \bu+\nabla p
	=2\mu_r \curl \bw+\rho \bF,
	\quad\mbox{in}\quad Q_T:=\Omega\times [0,T],
\label{eq:momento_lineal}
\\
&&\diver (\bu)=0,
	\quad\mbox{in}\quad Q_T,
\label{eq:incompresibilidad}
\\
&&\rho \Big(\bw_t+\diver(\bu\otimes \bw)\Big)-(c_a+c_d)\Delta \bw
	-(c_0+c_d-c_a)\nabla\diver \bw+
	4\mu_r \bw
\nonumber
\\
&&
\hspace{2cm}
=2\mu_r \curl \bu+\rho \bG,
	\quad\mbox{in}\quad Q_T,
\label{eq:momento_angular}
\\
&&
 \rho_t+\bu\cdot \nabla \rho=0,
	\quad\mbox{in}\quad Q_T,
\label{eq:ecuacion_continuidad}
\\
&&
 \bu(\bx,0)=\bu_0(\bx),\quad  \bw(\bx,0)=\bw_0(\bx),\quad  \rho(\bx,0)=\rho_0(\bx),
	\quad\mbox{on}\quad \Omega,
\label{eq:direct_problem_ic}
\\
&&
 \bu(\bx,t)=\bw(\bx,t)=0,
	\quad\mbox{on}\quad \Sigma_T:=\partial\Omega\times [0,T],
\label{eq:direct_problem_bc}
\end{eqnarray} 
where $\bF$ and $\bG$ are 
the density functions, modelling the vector
external sources for the linear and the angular momentum of particles; 
the constant $\mu$ is the usual Newtonian viscosity
and the constants $\mu_r,c_0$ and $c_d$ are 
the additional viscosities  satisfying 
\begin{align*}
\mu>0,\quad \mu_r>0, \quad c_a+c_d>0,\quad c_0+c_d>c_a,
\end{align*}
and related to the lack of symmetry of the stress tensor \cite{braz_2007,lukaszewicz_book}. 
The differential notation is the standard ones, i.e.  the
symbols $\bu_t,\bw_t$ and $\rho_t$ denote the time derivatives and
$\nabla,\Delta,\diver$ and $\curl$ denote the gradient, Laplacian,
divergence and rotational operators, respectively.

In this paper we want to obtain a priori estimates for the 
weak solution of the following mathematical model  for asymmetric fluids
when the external sources satisfy the specific decomposition
\begin{eqnarray}
\bF(\bx,t)=f(t)(\nabla h(\bx,t)- \bm(\bx,t)), 
\quad
\bG(\bx,t)=g(t)\bq(\bx,t),
\quad
\mbox{in $\Omega_T$,}
	\label{eq:helmholtz_f_uno}
\end{eqnarray}
where $\bm$ and $\bq$ are given functions and $f,g$ and $h$
are unknown functions such that
\begin{eqnarray}
&&\diver(\rho\nabla h)=\diver(\rho \bm), 
	\quad\mbox{in}\quad\Omega, 
	\label{eq:helmholtz_f_dos}
\\
&&\frac{\partial h}{\partial \bn}=\bm\cdot  \bn, 
	\quad\mbox{on}\quad \Sigma_T,
	\label{eq:helmholtz_f_tres}
\\
&&\int_{\Omega}h(\bx,t)d\bx=0.
	\quad t\in[0,T].
	\label{eq:helmholtz_f_cuatro}
\end{eqnarray}
We notice that this type of representation of $\bF$ is a consequence of Helmholtz
decomposition \cite{fan_2008}.

\subsection{Notation}
In order to define the weak solution we recall the standard notation of some functional
spaces and operators frequently used 
to study the Navier-Stokes system, see \cite{antontsev_1990,lions_book_1,lions_book_2,temam_1977}
for details. 
The Banach space of measurable functions that are $p$-integrable 
in the sense of Lebesgue or are essentially bounded on $\Omega$ are denoted by $L^p(\Omega)$
for $p\in[1,\infty[$ and by $L^{\infty}(\Omega)$, respectively. 
We recall that, the norms in $L^p(\Omega)$ for  $p\in [1,\infty[$ and
$p=\infty$ are defined as follows
\begin{eqnarray*}
\|u\|_{L^p(\Omega)}:=\left(\int_{\Omega}|u(\bx)|^pd\bx\right)^{1/p}
\mbox{ and }
\|u\|_{L^{\infty}(\Omega)}:=\esssup_{\bx\in\Omega}|u(\bx)|,
\end{eqnarray*}
respectively.
The notation $W^{m,q}(\Omega),$ where $m\in\mathbb{N}$ and $q\ge 1$ is used for the Sobolev
space consisting of all functions in $L^q(\Omega)$ having all distributional derivatives
of the first $m$ orders belongs to $L^q(\Omega)$, i.e.
\begin{eqnarray*}
W^{m,q}(\Omega):=\Big\{\;u\in  L^q(\Omega)\;:\; 
	D^{\alpha}u\in L^q(\Omega)\mbox{ for } |\alpha|=1,\ldots,m\;\Big\}.
\end{eqnarray*}
The norm of $W^{m,q}(\Omega)$ is naturally defined as follows
\begin{eqnarray*}
\|u\|_{W^{m,q}(\Omega)}
	:=\left(\sum_{k=0}^{m}\sum_{|\alpha|=k}
	\|D^{\alpha}u\|^q_{L^q(\Omega)}\right)^{1/q}
\mbox{ and }
\|u\|_{W^{m,\infty}(\Omega)}
	:=\max_{0\le |\alpha|\le m}\|D^{\alpha}u\|_{L^\infty(\Omega)}.
\end{eqnarray*}
The vector-valued spaces $[L^2(\Omega)]^3$ and
$[W^{m,p}(\Omega)]^3$ are defined as usual and by simplicity are
denoted by bold characters, i.e. $\bL^2(\Omega)$ and
$\bW^{m,p}(\Omega)$, respectively.
Also, we use the following rather common notation 
in mathematical theory of fluid mechanics:
\begin{eqnarray*}
&&H^m(\Omega)=W^{m,2}(\Omega), 
\quad
H^m_0(\Omega)=\overline{C_0^\infty(\Omega)}^{\|\cdot\|_{H^1(\Omega)}},
\\
&&
\mathcal{V}(\Omega)=\Big\{\bv\in (C_0^\infty(\Omega))^3\;:\;
\diver(\bv)= 0\mbox{ in }\Omega\;\Big\},
\\
&&
\bH=\overline{\mathcal{V}(\Omega)}^{\|\cdot\|_{\bL^2(\Omega)}}
\quad
\mbox{and}
\quad
\bV=\overline{\mathcal{V}(\Omega)}^{\|\cdot\|_{\bH_0^1(\Omega)}},
\end{eqnarray*}
where $\overline{A}^{\|\cdot\|_{B}}$ denotes the closure of $A$ in $B$.
Furthermore, for a given Banach space $X$, we denote by $L^r(0,T;X)$, $r\ge 1$,
the  Banach space of the $X$-valued functions having bounded the
norm $\|\cdot\|_{L^r(0,T;B)}$ defined as follows
\begin{eqnarray*}
\|u\|_{L^r(0,T;B)}:=\left(\int_0^T\|u(\cdot,t)\|_{B}^r dt\right)^{1/r}
\mbox{ and }
\|u\|_{L^{\infty}(0,T;B)}:=\esssup_{t\in[0,T]}\|u(\cdot,t)\|_{B}.
\end{eqnarray*}
Concerning to the linear operators, we define the operators: $A,L_0$
and $L$.
We denote by $A$ the stokes operator defined from 
$D(A):=\bV\cap \bH^2(\Omega)\subset \bH$ to $ \bH$
by $A\bv=P(-\Delta \bv)$, where $P$ is the orthogonal projection of 
$\bL^2(\Omega)$ onto
$\bH$ induced by the Helmholtz decomposition of $\bL^2(\Omega)$. It is well known that
$A$ is an unbounded linear and positive self-adjoint  operator, and is characterized by 
the following identity
\begin{eqnarray}
(A\bw,\bv)=(\nabla \bw,\nabla \bv),
	\quad \forall \bw\in D(A),\quad \bv\in V,
\label{eq:ident_stokes}
\end{eqnarray}
where $(\cdot,\cdot)$ is the usual scalar product in $\bL^2(\Omega)$.
In second place, we consider the strongly uniformly elliptic operators
$L_0$ and $L$ defined on $D(L_0)=D(L)=\bH^1_0(\Omega)\cap \bH^2(\Omega)$
as follows
\begin{eqnarray}
L_0\mathbf{z}=-(c_a+c_d)\Delta \mathbf{z}-(c_0+c_d-c_a)\nabla \diver \mathbf{z}
\quad\mbox{and}\quad
L\mathbf{z}=L_0z+4\mu_r \mathbf{z}.
\label{eq:def_of_L_and_L0}
\end{eqnarray}
Note that $L$ is a positive operator under the assumption $c_0+c_d>c_a$, see 
\eqref{eq:momento_angular}.

\subsection{Presentation of the main result}

\begin{definition}
\label{def:strong_solutions_dp} \cite{boldrini_2003}
Consider that the  functions $f,g,\bm$ and $\bq$ are given.
Then,
a collection of functions
$\{\bu,\bw,\rho,p,h\}$ is a 
solution  \eqref{eq:momento_lineal}-\eqref{eq:helmholtz_f_cuatro} if
there exists $T_*\in ]0,T]$ such that the functions satisfy
the following four conditions
\begin{enumerate}
\item[(a)] Regularity conditions:
\begin{eqnarray}
&&
\bu\in C^0\Big([0,T_*[;D(A)\Big)\cap C^1\Big([0,T_*[;\bH\Big),
\label{eq:reg_of_u}
\\
&&
\bw\in C^0\Big([0,T_*[;D(L)\Big)\cap C^1\Big([0,T_*[;\bL^2(\Omega)\Big)
\quad\mbox{and}
\label{eq:reg_of_w}
\\
&&
\rho \in C^1(\overline{\Omega}\times [0,T_*[).
\label{eq:reg_of_rho}
\end{eqnarray}
\item[(b)] Integral identities:
\begin{eqnarray}
&&\Big((\rho \bu)_t,\bv\Big)
	-\int_{\Omega}\rho \bu\otimes \bu:\nabla \bv\; d\bx
	+(\mu +\mu_r)\Big(A \bu,\bv \Big)
\nonumber\\
&&
\hspace{2cm}
=2\mu_r \Big(\curl \bw,\bv\Big)+
	\Big(\rho \bF,\bv\Big),\quad
	\mbox{for } t\in ]0,T_*[\mbox{ and }\forall \bv\in V,
\label{eq:formulacion_variacional_u}
\\
&&\Big((\rho \bw)_t,\bvarphi\Big)
	-\int_{\Omega}\rho \bu\otimes \bw:\nabla \bvarphi \; d\bx
	+\Big(L \bw,\bvarphi \Big)
\nonumber\\
&&
\hspace{2cm}
=2\mu_r \Big(\curl \bu,\bvarphi\Big)+
	\Big(\rho \bG,\bvarphi\Big),\quad
	\mbox{for } t\in ]0,T_*[\mbox{ and }\forall \bvarphi\in 
	\bH^1_0(\Omega).
\label{eq:formulacion_variacional_w}
  \end{eqnarray}
\item[(c)] Mass conservation: $\rho$ satisfies \eqref{eq:ecuacion_continuidad} for
$(x,t)\in \overline{\Omega}\times [0,T_*[$.
\item[(d)] Initial condition:
$\bu,\bw,\rho$ satisfies \eqref{eq:direct_problem_ic} for $\bx\in\Omega.$
\end{enumerate}

\end{definition}

The main result of the paper is the following theorem
\begin{theorem}
\label{teo:global_estimates}
Assuming that the following hypotheses 
\begin{enumerate}
\item[(H$_0$)] The initial  density $\rho_0$ 
is such 
$\|\rho_0\|_{L^\infty(\Omega)} 
\in ]0,1/\vartheta[$
with $\vartheta=C_{gn}C_{poi}\max\{C^{reg}_1,C^{reg}_2\},$
where $C^{reg}_1$ and $C^{reg}_2$ the regularity constants defined 
on the proof of Lemma~\ref{eq:lem:estimates3_full}
( see \eqref{eq:stokes_for_uuu} and \eqref{eq:regularity_for_Lw}) 
and $C_{gn},C_{poi}$ are the Poincar\'e an Gagliardo-Nirenberg constants 
(see \eqref{eq:gagnir} and \eqref{eq:poi}), respectively.

\item[(H$_1$)] The initial  density $\rho_0$  belongs to $W^{1,q}(\Omega)$
for $q>3$
and $\rho_0(\bx)\in [\alpha,\beta]\subset]0,\infty[$ a.e. in~$\Omega$,
\item[(H$_2$)] The initial velocity $\bu_0$  belongs to $D(A)$ and
\item[(H$_3$)] The initial angular velocity $\bw_0$   belongs to $D(L),$
\item[(H$_4$)] The functions $\bm$ and $\bq$ belong to $C^1([0,T];\bH^2(\Omega))$,
\end{enumerate}
are satisfied.
Then, there exists $\upkappa_j\in\R^+$ for $j=1,\ldots,11$ 
depending only on $\Omega,c_a,c_0,c_d,\alpha,\beta,\mu_r,\bm$ and $\bq$
(independents of $f$ and $g$) and 
two small enough times $T_1,T_2\in [0,T_*]$, 
independents of $f$ and $g$, such that the following estimates hold
\begin{eqnarray}
&&\| \rho\|_{L^\infty([0,T_*];L^\infty(\Omega))}\in ]\alpha,\beta[,
	\label{eq:lema:aprioriestimatesforrho}\\
&&\| \bu\|_{L^\infty([0,T_1];\bH^1_0(\Omega))}+
	\| \bw\|_{L^\infty([0,T_1];\bH^1_0(\Omega))}
\nonumber
\\
&&
\hspace{3.1cm}
	+\|\bu_t\|_{L^\infty([0,T_1];\bL^2(\Omega))}+
	\| \bw_t\|_{L^\infty([0,T_1];\bL^2(\Omega))}
	\le\upkappa_1,\qquad
\label{teo:global_estimates_kapa1}
\\
&&
\| \bu_t\|_{L^\infty([0,T_2];\bL^2(\Omega))}+
	\| \bw_t\|_{L^\infty([0,T_2];\bL^2(\Omega))}+
	\|\bu_t\|_{L^2([0,T_2];\bH^1(\Omega))}
	\nonumber
\\
&&
\hspace{3.1cm}
	+
	\| \bw_t\|_{L^2([0,T_2];\bH^1(\Omega))}+
	\| \nabla \rho\|_{L^\infty([0,T_*];\bL^q(\Omega))}
	\le\upkappa_2,
\label{teo:global_estimates_kapa2}
\\
&&
	\| h\|_{L^\infty([0,T_2];H^2(\Omega))}\le\upkappa_3,
\label{teo:global_estimates_kapa3_4}
\\
&&
	\| \nabla h_t\|_{L^\infty([0,T_2];\bL^2(\Omega))}\le\upkappa_4+\upkappa_5 
	\|(f,g)\|_{[H^1(0,T)]^2},
\label{teo:global_estimates_kapa5_8}
\\
&&
\| \bu\|_{L^\infty([0,T_1];\bH^2(\Omega))}+
	\| p\|_{L^\infty([0,T_1];H^1(\Omega))}
	\le\upkappa_6+\upkappa_{7} \|(f,g)\|_{[H^1(0,T)]^2},
\label{teo:global_estimates_kapa9_10}
\\
&&
\| \rho_t\|_{L^\infty([0,T_*];L^2(\Omega))}\le \upkappa_{8},
	\qquad
	\| \rho_t\|_{L^\infty([0,T_*];L^q(\Omega))}
	\le\upkappa_{9}+\upkappa_{10} \|(f,g)\|_{[H^1(0,T)]^2},
\label{teo:global_estimates_kapa11_13}
\\
&&
\| \bu\|_{L^2([0,T_2];\bW^{2,s}(\Omega))}+
	\| p\|_{L^2([0,T_2];W^{1,s}(\Omega))}
	\le \upkappa_{11},
\label{teo:global_estimates_kapa14}
\end{eqnarray}
where $q\in ]3,\infty[$ and $s\in [2,\infty[$.
\end{theorem}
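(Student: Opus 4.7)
The plan is to derive the estimates in a cascade, starting from the transport structure of $\rho$ and moving progressively through energy estimates, time-differentiated energy estimates, elliptic regularity for the potential $h$, and finally Stokes regularity for $\bu$ and $p$. The key is to close a bootstrap on a short time interval, using the smallness assumption (H$_0$) to absorb the nonlinear terms. First, the bound \eqref{eq:lema:aprioriestimatesforrho} follows directly from \eqref{eq:ecuacion_continuidad} and $\diver\bu=0$: integration along the characteristics of $\bu$ preserves the pointwise values of $\rho_0$, hence $\rho(\cdot,t)\in[\alpha,\beta]$ for every $t\in[0,T_*]$, independently of the other unknowns.

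For the first-order estimate \eqref{teo:global_estimates_kapa1}, I would rewrite \eqref{eq:formulacion_variacional_u}--\eqref{eq:formulacion_variacional_w} using \eqref{eq:ecuacion_continuidad} in the form $\rho(\bu_t+\bu\cdot\nabla\bu)$ and analogously for $\bw$, test respectively with $\bu_t$ and $\bw_t$, and add the two identities. The elliptic principal parts produce the time derivative of $\frac12[(\mu+\mu_r)\|\nabla\bu\|_{\bL^2}^2+(c_a+c_d)\|\nabla\bw\|_{\bL^2}^2+(c_0+c_d-c_a)\|\diver\bw\|_{L^2}^2+4\mu_r\|\bw\|_{\bL^2}^2]$, the rotational cross-terms $2\mu_r(\curl\bw,\bu_t)+2\mu_r(\curl\bu,\bw_t)$ can be integrated by parts to exact time derivatives or absorbed, and the convective terms $\rho\bu\cdot\nabla\bu\cdot\bu_t$, $\rho\bu\cdot\nabla\bw\cdot\bw_t$ are controlled by the $L^\infty$ density bound together with H\"older, Sobolev and the Gagliardo--Nirenberg inequalities. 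The resulting nonlinear differential inequality is closed on a short time $T_1$ via a Gr\"onwall-type argument, yielding $\upkappa_1$ depending only on data and $\alpha,\beta$.

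The second-order estimates \eqref{teo:global_estimates_kapa2} are obtained by differentiating \eqref{eq:formulacion_variacional_u}--\eqref{eq:formulacion_variacional_w} in $t$ and testing with $\bu_t,\bw_t$; here the time derivative of $\bF,\bG$ forces the appearance of $\|(f,g)\|_{H^1(0,T)}$ in all subsequent estimates, while $\rho_t=-\bu\cdot\nabla\rho$ supplies the density's time derivative. The $\bL^q$ bound on $\nabla\rho$ is obtained by differentiating \eqref{eq:ecuacion_continuidad} spatially, testing with $|\nabla\rho|^{q-2}\nabla\rho$ and invoking $\bu\in L^1(0,T_2;\bW^{1,\infty}(\Omega))$. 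For the potential, \eqref{teo:global_estimates_kapa3_4}--\eqref{teo:global_estimates_kapa5_8} come from classical elliptic regularity applied to \eqref{eq:helmholtz_f_dos}--\eqref{eq:helmholtz_f_cuatro}: since $\rho$ is bounded away from zero and $\rho\in W^{1,q}(\Omega)\hookrightarrow C^{0,\gamma}(\overline{\Omega})$ for $q>3$, the Neumann problem yields $h\in H^2(\Omega)$, and time-differentiation produces an analogous equation for $h_t$ whose right-hand side involves $\rho_t$ and $\bm_t$, generating the $(f,g)$-dependent bound \eqref{teo:global_estimates_kapa5_8}. Then \eqref{teo:global_estimates_kapa9_10} follows from viewing \eqref{eq:momento_lineal} as a stationary Stokes system for $(\bu,p)$ with right-hand side $-\rho\bu_t-\rho\bu\cdot\nabla\bu+2\mu_r\curl\bw+\rho\bF$ already controlled in $\bL^2$, \eqref{teo:global_estimates_kapa11_13} from $\rho_t=-\bu\cdot\nabla\rho$ combined with the preceding bounds, and \eqref{teo:global_estimates_kapa14} from $L^s$-Stokes regularity applied pointwise in time and then integrated.

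The main obstacle is closing the bootstrap behind \eqref{teo:global_estimates_kapa1}. After the testing above, the right-hand side produces terms of the type $\|\rho\|_{L^\infty}\|A\bu\|_{\bL^2}\|\bu_t\|_{\bL^2}$ (and the $\bw$-analogue with $L\bw$), which cannot be absorbed into the dissipation unless $\|\rho\|_{L^\infty}$ is small. This is precisely where (H$_0$) enters: the constants $C^{reg}_1$ and $C^{reg}_2$ from Stokes and $L$-regularity express $\|A\bu\|_{\bL^2}$ and $\|L\bw\|_{\bL^2}$ in terms of the right-hand sides, while $C_{gn}$ and $C_{poi}$ handle the intermediate Sobolev embeddings; the condition $\|\rho_0\|_{L^\infty}<1/\vartheta$ with $\vartheta=C_{gn}C_{poi}\max\{C^{reg}_1,C^{reg}_2\}$ is exactly the smallness threshold that makes the absorption possible, after which a short-time Gr\"onwall argument closes the bootstrap and fixes $T_1,T_2$ independently of $f$ and $g$.
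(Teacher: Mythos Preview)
Your proposal is correct and mirrors the paper's own cascade: maximum-principle bounds on $\rho$, testing with $\bu_t,\bw_t$ and closing a nonlinear Gr\"onwall inequality on $[0,T_1]$ (the paper's Lemma~\ref{lem:u_ut_w_wt_estimate}), time-differentiated energy estimates (Lemma~\ref{lem:estimates3}), elliptic regularity for $h$ (Lemma~\ref{lema:aprioriestimatesforhandr}), Stokes and $L$-regularity for $(\bu,p,\bw)$ (Lemma~\ref{prop:auxiliar_stokes_elliptic_results}), the $L^q$ transport inequality for $\nabla\rho$, and finally $W^{2,s}$-Stokes bounds. The one point to sharpen is the mechanism behind (H$_0$): the dangerous contribution is not literally $\|\rho\|_{L^\infty}\|A\bu\|_{\bL^2}\|\bu_t\|_{\bL^2}$ but the convective term $\int_\Omega\rho(\bu\cdot\nabla)\bu\cdot\bu_t$, which after Gagliardo--Nirenberg carries a factor $\|\bu\|_{\bH^2}^{1/2}$, and it is only after substituting the Stokes/$L$-regularity estimate of Lemma~\ref{prop:auxiliar_stokes_elliptic_results} (which feeds back $\sqrt{\beta}\max\{C^{reg}_1,C^{reg}_2\}\|\sqrt{\rho}\,\bu_t\|_{\bL^2}$ on the right) that the coefficient $\beta\,C_{gn}C_{poi}\max\{C^{reg}_1,C^{reg}_2\}$ emerges and must be forced below $1$.
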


We remark that some a priori estimates 
were introduced in \cite{boldrini_2003} in order to prove
the existence of weak solutions. Now, 
in Theorem~\ref{teo:global_estimates} we include some new estimates
and also in the proofs of the existing estimates
we introduce a different methodology.

\section{Proof of main result}
\label{subsec:a_priori_estimates}

In order to prove Theorem~\ref{teo:global_estimates} we recall some 
useful results. Then, we prove some Lemmas and finally we present the 
details of the proof.

\subsection{Some useful inequalities}
We recall some inequalities which will be used frequently
in order to get the desired estimates: 
\begin{itemize}
 \item[(i)] The Young's inequality. Let us consider $p,q\in ]1,\infty[$
 such that $p^{-1}+q^{-1}=1$, then 
\begin{eqnarray}
ab\le \epsilon a^p+C_{\epsilon} b^q,
\quad a,b\ge 0, 
\quad \epsilon>0,
\quad C_{\epsilon}=(p-1)\epsilon^{(1-q)}p^{-q}.
\label{eq:yung}
\end{eqnarray}
In particular, for $p=q=2$ and $\epsilon=1/2$ we have the Cauchy inequality.
 \item[(ii)] The Pioncar\'e  inequality. Let $\Omega$ be a connected,
bounded Lipschitz domain. Then, there exists 
$C_{poi}> 0$ depending only on  $p$ and $\Omega$ such that
\begin{eqnarray}
\|u\|_{L^q(\Omega)}\le C_{poi} \|\nabla u\|_{L^p(\Omega)},\quad
p\in [1,3[,\quad q\in [1,3p(3-p)^{-1}],\quad
u\in W^{1,p}_0(\Omega).
\label{eq:poi}
\end{eqnarray}
For a more general inequality in $W^{1,p}(\Omega)$ we refer
to proposition III.2.39 in~\cite{boyer_book}.

\item[(iii)] The Gagliardo-Nirenberg inequality. 
Let $\Omega\subset\R^3$ be a bounded  Lipschitz domain.
There exists $C_{gn}>0$ depending only on 
$q$ and $\Omega$ such that
\begin{eqnarray}
\|\nabla u\|_{L^{2q/(q-2)(\Omega)}}
\le C_{gn} \|\nabla u\|^{1-3/q}_{L^2(\Omega)}\|u\|^{3/q}_{H^2(\Omega)},
\quad q\in [2,\infty[,\quad 
u\in H^2(\Omega).
\label{eq:gagnir}
\end{eqnarray}
For a more general version of this inequality we refer to 
proposition III.2.35 in~\cite{boyer_book}.

\item[(iv)] Continuous embedding of $H^2(\Omega)$ in $L^\infty(\Omega)$.
Let $\Omega\subset\R^3$ be a bounded  Lipschitz domain. Then 
$H^2(\Omega)$ is continuous embedding in $L^\infty(\Omega)$ or
equivalently there exists $C^{2,\infty}_{iny}>0$ such that
\begin{eqnarray}
\| u\|_{L^\infty(\Omega)}\le C^{2,\infty}_{iny} \| u\|_{H^2(\Omega)},
\quad u\in H^2(\Omega).
\label{eq:h2linf}
\end{eqnarray}

\end{itemize}
For other inequalities which are not given in the previous list we
refer the books \cite{evans_book,boyer_book}.

\subsection{Space estimates}
In this section we obtain  some space estimates. Then, by notational convenience, 
we use  $\|\cdot\|_{L^p}$ and $\|\cdot\|_{H^p}$   
to abbreviate
the norms $\|\cdot\|_{{L^p}(\Omega)}$ and $\|\cdot\|_{{H^p}(\Omega)}$,
respectively.

\begin{lemma}
\label{lema:aprioriestimatesforrho}
The following estimate holds:
$0< \alpha \le \rho(\bx,t)\le\beta$ for 
all $(\bx,t)\in\Omega\times [0,T_*]$.
\end{lemma}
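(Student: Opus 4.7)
The plan is to use the method of characteristics applied to the transport equation \eqref{eq:ecuacion_continuidad}, exploiting that the velocity field $\mathbf{u}$ is incompressible, vanishes on $\partial\Omega$, and is regular enough to generate a well-defined flow. Concretely, from the regularity condition \eqref{eq:reg_of_u} we have $\mathbf{u}\in C^0([0,T_*[;D(A))$ with $D(A)\subset\mathbf{H}^2(\Omega)\hookrightarrow\mathbf{L}^\infty(\Omega)$ (cf. \eqref{eq:h2linf}), and $\mathbf{u}$ is Lipschitz in $\mathbf{x}$ uniformly on compact time intervals. In particular, the classical Cauchy–Lipschitz theory yields a unique $C^1$ flow $X(t;\mathbf{x}_0)$ solving
\begin{equation*}
\frac{d}{dt}X(t;\mathbf{x}_0)=\mathbf{u}(X(t;\mathbf{x}_0),t),\qquad X(0;\mathbf{x}_0)=\mathbf{x}_0.
\end{equation*}

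Next, I would verify that the flow preserves $\overline{\Omega}$. Since $\mathbf{u}=0$ on $\partial\Omega$ by \eqref{eq:direct_problem_bc}, any trajectory that starts on $\partial\Omega$ stays there, so by continuity and uniqueness no trajectory initiated in $\Omega$ can cross $\partial\Omega$; hence $X(t;\cdot)$ maps $\Omega$ to $\Omega$. Because $\mathrm{div}\,\mathbf{u}=0$ from \eqref{eq:incompresibilidad}, Liouville's formula gives $\det\nabla_{\mathbf{x}_0}X(t;\mathbf{x}_0)\equiv 1$, so $X(t;\cdot):\Omega\to\Omega$ is a measure-preserving $C^1$-diffeomorphism for each $t\in[0,T_*[$.

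Then, using the regularity $\rho\in C^1(\overline{\Omega}\times[0,T_*[)$ from \eqref{eq:reg_of_rho}, I would compute
\begin{equation*}
\frac{d}{dt}\rho(X(t;\mathbf{x}_0),t)=\rho_t(X(t;\mathbf{x}_0),t)+\mathbf{u}(X(t;\mathbf{x}_0),t)\cdot\nabla\rho(X(t;\mathbf{x}_0),t)=0
\end{equation*}
by \eqref{eq:ecuacion_continuidad}, so $\rho$ is constant along characteristics: $\rho(X(t;\mathbf{x}_0),t)=\rho_0(\mathbf{x}_0)$. Since $X(t;\cdot)$ is a bijection of $\Omega$, every $\mathbf{x}\in\Omega$ has the form $X(t;\mathbf{x}_0)$ for a unique $\mathbf{x}_0\in\Omega$, giving $\rho(\mathbf{x},t)=\rho_0(\mathbf{x}_0)\in[\alpha,\beta]$ by hypothesis $(H_1)$. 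The inequality extends to $\overline{\Omega}\times[0,T_*]$ by the continuity of $\rho$.

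The only real subtlety is ensuring that the characteristics do not escape $\overline{\Omega}$ in finite time; this is where the no-slip boundary condition combined with Lipschitz continuity of $\mathbf{u}$ is essential (Lipschitz regularity in $\mathbf{x}$ prevents characteristics from reaching $\partial\Omega$ in finite time from inside, by Grönwall). Once this is secured, the conclusion is immediate from the conservation along the flow and hypothesis $(H_1)$.
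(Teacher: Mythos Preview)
Your argument is correct and amounts to an explicit unpacking of what the paper records in one line as ``the maximum principle'' for the transport equation \eqref{eq:ecuacion_continuidad} with divergence-free drift \eqref{eq:incompresibilidad} and data satisfying (H$_1$); the approaches coincide. One small technical caveat: $D(A)\subset\mathbf{H}^2(\Omega)$ in three dimensions does \emph{not} embed into $W^{1,\infty}$, so the Lipschitz-in-$\mathbf{x}$ claim is not quite justified as stated; however $\mathbf{H}^2$ does yield log-Lipschitz regularity (hence Osgood uniqueness for the ODE), or alternatively one may invoke DiPerna--Lions theory for divergence-free $W^{1,p}$ fields, and either fix keeps the rest of your argument intact.
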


\begin{proof}
We deduce the estimate by equations \eqref{eq:incompresibilidad}
and \eqref{eq:ecuacion_continuidad}, the hypothesis (H$_1$)
and the maximum principle.
\end{proof}

\begin{lemma}
\label{lema:aprioriestimatesforhandr}
There exists  $\Uppi_i,$  $i=1,\ldots,6,$ 
depending only on $\Omega,\alpha,\beta,\bm$ and $\bq$
(independents of $f$ and $g$)
such that the following estimates holds
\begin{eqnarray}
&&\|\nabla h(\cdot,t)\|_{\bL^2}\le \Uppi_1,
\label{hx_and_rx_estimate}
\\
&&\|h (\cdot,t)\|_{H^2}
 \le 
 \Uppi_2+\Uppi_3 \|\nabla\rho(\cdot,t)\|^{q/(q-3)}_{\bL^q},
\label{h_and_r_estimate}
\\
&&\|\nabla h_t(\cdot,t)\|_{\bL^2}\le 
\Uppi_4
+
 \Big(\Uppi_5+\Uppi_6 \|\nabla\rho(\cdot,t)\|^{q/(q-3)}_{\bL^q}\Big)
 \|u(\cdot,t)\|_{\bH^2}
 \|\nabla\rho(\cdot,t)\|_{\bL^q},
 \qquad
\label{ht_and_rt_estimate}
\end{eqnarray}
for all $t\in [0,T_*]$  and $q\in ]3,\infty[ $.

\end{lemma}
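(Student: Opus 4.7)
The plan is to handle the three estimates in order, since each one feeds into the next.

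First, for \eqref{hx_and_rx_estimate}, I would test the elliptic equation \eqref{eq:helmholtz_f_dos} against $h$ itself: after integration by parts the boundary contribution $\int_{\partial\Omega} h\,\rho(\nabla h - \bm)\cdot\bn\, dS$ vanishes thanks to the Neumann condition \eqref{eq:helmholtz_f_tres}, leaving
$$\int_\Omega \rho|\nabla h|^2\,d\bx = \int_\Omega \rho\,\bm\cdot\nabla h\,d\bx.$$
The bounds $\alpha\le\rho\le\beta$ from Lemma~\ref{lema:aprioriestimatesforrho}, combined with Cauchy--Schwarz, then immediately yield $\|\nabla h\|_{\bL^2}\le (\beta/\alpha)\|\bm\|_{\bL^2}$, which is \eqref{hx_and_rx_estimate}.

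Next, for \eqref{h_and_r_estimate}, I would recast \eqref{eq:helmholtz_f_dos} as a Neumann problem for the Laplacian,
$$\Delta h = \diver\bm + \rho^{-1}\nabla\rho\cdot(\bm - \nabla h),\qquad \frac{\partial h}{\partial\bn} = \bm\cdot\bn,$$
together with the gauge condition \eqref{eq:helmholtz_f_cuatro}, and apply standard elliptic regularity to obtain $\|h\|_{H^2}\le C(\|\Delta h\|_{L^2} + \|\bm\cdot\bn\|_{H^{1/2}(\partial\Omega)})$; the data term is harmless by hypothesis (H$_4$). The delicate piece is $\|\rho^{-1}\nabla\rho\cdot\nabla h\|_{L^2}$, which I would bound via H\"older with exponents $q$ and $2q/(q-2)$ and then via Gagliardo--Nirenberg \eqref{eq:gagnir} to produce the factor $\|\nabla h\|_{\bL^2}^{1-3/q}\|h\|_{H^2}^{3/q}$. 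After using Step~1 to control $\|\nabla h\|_{\bL^2}$, the resulting inequality reads
$$\|h\|_{H^2}\le \tilde C_1 + \tilde C_2\|\nabla\rho\|_{\bL^q}\|h\|_{H^2}^{3/q}.$$
Young's inequality with conjugate pair $(q/3,\,q/(q-3))$ then absorbs $\|h\|_{H^2}^{3/q}$ into the left-hand side at the cost of an additive term $\tilde C_3\|\nabla\rho\|_{\bL^q}^{q/(q-3)}$, producing exactly \eqref{h_and_r_estimate}; the restriction $q>3$ is used precisely here to guarantee $q/3>1$.

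Finally, for \eqref{ht_and_rt_estimate}, I would exploit the fact that $\rho(\nabla h - \bm)$ has zero divergence in $\Omega$ and zero normal trace on $\partial\Omega$ by \eqref{eq:helmholtz_f_dos}--\eqref{eq:helmholtz_f_tres}; differentiating in time preserves both properties, yielding
$$\diver(\rho\nabla h_t) = \diver(\rho\bm_t) - \diver\bigl(\rho_t(\nabla h - \bm)\bigr),\qquad \frac{\partial h_t}{\partial\bn} = \bm_t\cdot\bn,$$
together with $\int_\Omega h_t\,d\bx = 0$ from \eqref{eq:helmholtz_f_cuatro}. Testing with $h_t$ and cancelling the surface terms produces
$$\int_\Omega \rho|\nabla h_t|^2\,d\bx = \int_\Omega \rho\,\bm_t\cdot\nabla h_t\,d\bx - \int_\Omega \rho_t(\nabla h - \bm)\cdot\nabla h_t\,d\bx.$$
I would then substitute $\rho_t = -\bu\cdot\nabla\rho$ via \eqref{eq:ecuacion_continuidad}, control $\|\bu\|_{L^\infty}$ by $\|\bu\|_{\bH^2}$ through the embedding \eqref{eq:h2linf}, control $\|\nabla h - \bm\|_{\bL^{2q/(q-2)}}$ by Gagliardo--Nirenberg together with the $H^2$ bound from Step~2, and finally use Young to absorb $\|\nabla h_t\|_{\bL^2}$ on the left, thereby recovering precisely \eqref{ht_and_rt_estimate}. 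The main obstacle in the whole argument is the bookkeeping in Step~2: arranging the H\"older split, the Gagliardo--Nirenberg exponent $3/q$, and the subsequent Young absorption so that no regularity beyond $\rho\in W^{1,q}$ with $q>3$ is required, which is exactly what hypothesis (H$_1$) supplies.
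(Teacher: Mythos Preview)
Your proposal is correct and follows essentially the same route as the paper: energy-test \eqref{eq:helmholtz_f_dos} against $h$ for Step~1, rewrite as a Neumann Laplace problem and combine elliptic regularity with the Gagliardo--Nirenberg/Young absorption for Step~2, then time-differentiate, test against $h_t$, and replace $\rho_t$ via the transport equation for Step~3. The only cosmetic difference is that the paper splits the $\rho_t(\nabla h-\bm)$ term into two pieces (placing $L^\infty$ on $\bu$ in one and on $\bm$ in the other) whereas you keep $\nabla h-\bm$ together; both lead to the same final estimate.
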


\begin{proof}
From \eqref{eq:helmholtz_f_dos}, by applying 
Lemma~\ref{lema:aprioriestimatesforrho}, integration by parts, the boundary condition
\eqref{eq:helmholtz_f_tres}, and H\"older and Young inequalities, we
have that
\begin{eqnarray*}
\alpha\int_{\Omega}|\nabla h|^2(\bx,t)d\bx
&\le & \int_{\Omega}\Big(\rho|\nabla h|^2\Big)(\bx,t)d\bx
=\int_{\Omega}\Big(\rho\nabla h\cdot \bm\Big)(\bx,t)d\bx
\\
&\le & \beta\|\nabla h(\cdot,t)\|_{\bL^2}\|\bm(\cdot,t)\|_{\bL^2}
\le \frac{\epsilon\beta}{2}\|\nabla h(\cdot,t)\|^2_{\bL^2}
+\frac{\beta}{2\epsilon}\|\bm(\cdot,t)\|^2_{\bL^2},
\end{eqnarray*}
for each $t\in [0,T_*]$ and $\epsilon>0$. Hence,
selecting $\epsilon\in ]0,2\alpha\beta^{-1}[$ 
and defining 
\begin{eqnarray}
 \Uppi_1=\sqrt{\beta\epsilon^{-1} (2\alpha-\beta \epsilon)^{-1}}
 \|\bm(\cdot,t)\|^2_{\bL^2}.
\label{eq:uppi_uno}
\end{eqnarray}
we see that the estimate \eqref{hx_and_rx_estimate} is valid. 

Now, we can proceed to prove \eqref{h_and_r_estimate}. We start by recalling
the identities $\diver(\rho \nabla h)=\rho \Delta h+\nabla\rho\cdot\nabla h$ and
$\diver(\rho \bm)=\rho\diver(\bm)+\nabla\rho\cdot \bm$, which imply that 
the  equation  \eqref{eq:helmholtz_f_dos} can be rewritten as follows
\begin{eqnarray}
-\Delta h=-\diver(\bm)-\frac{1}{\rho}\nabla\rho\cdot \bm
 +\frac{1}{\rho}\nabla\rho\cdot\nabla h.
 \label{eq:elliptic_helmholtz}
\end{eqnarray}
Clearly, by application of  the estimate \eqref{hx_and_rx_estimate}, we deduce
that the right hand side of \eqref{eq:elliptic_helmholtz}
belongs to $L^2(\Omega)$. Then, by
the regularity of solutions for  elliptic equations 
(Theorem III.4.3\cite{boyer_book}) applied to
\eqref{eq:elliptic_helmholtz}-\eqref{eq:helmholtz_f_tres},
the inequality \eqref{eq:gagnir},
the trace thorem (Theorem III.2.19\cite{boyer_book}),
Lemma~\ref{lema:aprioriestimatesforrho}, the inequality \eqref{eq:h2linf},
and  the estimate \eqref{hx_and_rx_estimate}, 
we can follow that there exists $C_{reg}>0$, independent of $h$ such that 
the following bound
\begin{eqnarray*}
&& \|h (\cdot,t)\|_{H^2}
 \le C_{reg}
 \left\{
  \left\|\left(\diver(\bm)+\frac{1}{\rho}\nabla\rho\cdot \bm
  -\frac{1}{\rho}\nabla\rho\cdot\nabla h\right)(\cdot,t)\right\|_{\bL^2}
  +\|\bm\cdot\bn\|_{H^{1/2}}
  \right\}
  \\
  &&\hspace{1cm}
  \le C_{reg}\Big\{
  \|\nabla \bm(\cdot,t)\|_{\bL^2}
  +\frac{1}{\alpha}\| \bm(\cdot,t)\|_{L^{\infty}}\|\nabla\rho(\cdot,t)\|_{\bL^2}
  \\
  &&\hspace{2.5cm}  
  +\frac{1}{\alpha}\|\nabla\rho(\cdot,t)\|_{\bL^q}\|\nabla h(\cdot,t)\|_{\bL^{2q/(q-2)}}
  +C_{tr}\|\bm\cdot\bn\|^{1/2}_{L^{2}}\|\bm\cdot\bn\|^{1/2}_{H^{1}}
  \Big\}
  \\
  &&\hspace{1cm}
  \le
  C_{reg}\Big\{
  \|\nabla \bm(\cdot,t)\|_{\bL^2}
  +\frac{|\Omega|^{\frac{q-2}{2q}}}{\alpha}\| \bm(\cdot,t)\|_{\bL^{\infty}}
  \|\nabla\rho(\cdot,t)\|_{\bL^q}
  \\
  &&\hspace{2.5cm}
  +\frac{C_{gn}}{\alpha}\|\nabla\rho(\cdot,t)\|_{\bL^q}\|
     \|\nabla h(\cdot,t)\|^{1-3/q}_{\bL^2}\|h(\cdot,t)\|^{3/q}_{H^2}
     +C_{tr}\|\bm\|_{\bH^{1}}
     \Big\}
  \\
  &&\hspace{1cm}
  \le
  C_{reg}\Big\{
 (C_{tr}+1)\|\bm\|_{\bH^{1}}
  +\frac{C^{2,\infty}_{iny}|\Omega|^{\frac{q-2}{2q}}}{\alpha}
  \| \bm(\cdot,t)\|_{\bH^2}
  \|\nabla\rho(\cdot,t)\|_{\bL^q}
  \\
  &&\hspace{2.5cm}
  +\frac{C_{gn}(\Uppi_1)^{1-3/q}}{\alpha}\;
  \|\bm(\cdot,t)\|^{1-3/q}_{\bL^2}
  \|\nabla\rho(\cdot,t)\|_{\bL^q}\|h(\cdot,t)\|^{3/q}_{H^2}\Big\},
\end{eqnarray*}
holds for each $t\in [0,T_*]$ and $q\in ]3,\infty[ $. 
Here $C_{tr}$ denotes the trace constant.
Thus,
selecting $\epsilon'\in ]0,\alpha(C_{reg}C_{gn})^{-1}[$
and $\epsilon''>0$,
by the application
of two times of the Young's inequality \eqref{eq:yung} 
we complete the proof
of \eqref{h_and_r_estimate} with $\Uppi_2$ and $\Uppi_3$
given by 
\begin{align}
&\Uppi_2=\frac{C_{reg}}{\alpha-\epsilon'C_{reg}C_{gn}}\Big(
    \alpha(C_{tr}+1)\|\bm(\cdot,t)\|_{\bH^1}
    +|\Omega|^{(q-2)/{2q}}C^{2,\infty}_{iny}\epsilon''\|\bm(\cdot,t)\|^{3/q}_{\bH^2}
    \Big),
    \hspace{1cm}
\label{eq:uppi_dos}    
 \\
&\Uppi_3=\frac{C_{reg}}{\alpha-\epsilon'C_{reg}C_{gn}}\Big(
       |\Omega|^{(q-2)/{2q}}C^{2,\infty}_{iny}C_{\epsilon''}
        +C_{gn}C_{\epsilon'}\Uppi_1\Big).
\label{eq:uppi_tres}
\end{align}

The proof of \eqref{ht_and_rt_estimate} is given as follows.
Taking $\partial_t$ to 
the first equation of \eqref{eq:helmholtz_f_dos}, testing the result by $h_t$,
using the estimate of Lemma~\ref{lema:aprioriestimatesforrho},
the H\"older inequality, the equation \eqref{eq:ecuacion_continuidad}
and the inequality \eqref{eq:gagnir},
we have that
\begin{eqnarray*}
&&\alpha\|\nabla h_t(\cdot,t)\|^2_{\bL^2}
\le 
 \int_{\Omega} \rho|\nabla h_t (\cdot,t)|^2d\bx
 \\
&& \qquad
 \le 
 \left|\int_{\Omega} \Big(\rho \bm_t \cdot \nabla h_t\Big)(\cdot,t)d\bx\right|
 +\left|\int_{\Omega} \Big(\rho_t \nabla h\cdot \nabla h_t\Big)(\cdot,t)d\bx\right|
 +\left|\int_{\Omega} \Big(\rho_t   \bm\cdot \nabla h_t\Big)(\cdot,t)d\bx\right|
 \\
&& \qquad
 \le 
 \|\rho(\cdot,t)\|_{\bL^\infty}\|\bm_t(\cdot,t)\|_{\bL^2}
 \|\nabla h_t(\cdot,t)\|_{\bL^2}
 +\left|\int_{\Omega} \Big((\bu\cdot\nabla\rho) 
 \nabla h\cdot \nabla h_t\Big)(\cdot,t)d\bx\right|
 \\
 &&\qquad\quad
 +\left|\int_{\Omega} \Big((\bu\cdot\nabla\rho)
      \bm\cdot \nabla h_t\Big)(\cdot,t)d\bx\right|
 \\ 
 &&\qquad
  \le 
 \beta\|\bm_t(\cdot,t)\|_{\bL^2}\|\nabla h_t(\cdot,t)\|_{\bL^2}
 +\| \bu(\cdot,t)\|_{\bL^\infty}\|\nabla\rho(\cdot,t)\|_{\bL^q}
 \|\nabla h(\cdot,t)\|_{\bL^{2q/(q-2)}}\|\nabla h_t(\cdot,t)\|_{\bL^2}
  \\ 
 &&\qquad\quad
 +\| \bm(\cdot,t)\|_{\bL^\infty}\|\nabla\rho(\cdot,t)\|_{\bL^q}
 \|\bu(\cdot,t)\|_{\bL^{2q/(q-2)}}\|\nabla h_t(\cdot,t)\|_{\bL^2}
 \\
&& \qquad
  \le 
 \|\nabla h_t(\cdot,t)\|_{\bL^2}\Big\{\beta\|\bm_t(\cdot,t)\|_{\bL^2}
 +C_{gn}\| \bu(\cdot,t)\|_{\bL^\infty}\|\nabla\rho(\cdot,t)\|_{\bL^q}
 \|\nabla h(\cdot,t)\|^{1-3/q}_{\bL^{2}}\|h(\cdot,t)\|^{3/q}_{H^{2}}
  \\ 
&& \qquad\quad
 +\| \bm(\cdot,t)\|_{\bL^\infty}\|\nabla\rho(\cdot,t)\|_{\bL^q}
 \|\bu(\cdot,t)\|_{\bL^{2q/(q-2)}}\Big\}.
\end{eqnarray*}
for $q\in ]3,\infty[.$
Then, by \eqref{eq:poi},  \eqref{eq:h2linf}, \eqref{eq:yung},
\eqref{hx_and_rx_estimate} and
\eqref{h_and_r_estimate}, we obtain
 \begin{eqnarray*}
\|\nabla h_t(\cdot,t)\|_{\bL^2} 
  \le 
\frac{\beta}{\alpha}\|\bm_t(\cdot,t)\|_{\bL^2}
 +\frac{C_{gn} C^{2,\infty}_{iny}\dot{\epsilon}}{\alpha}
 \| \bu(\cdot,t)\|_{\bH^2}\|\nabla\rho(\cdot,t)\|_{\bL^q}
 \Big\{\Uppi_2+\Uppi_3 \|\nabla\rho(\cdot,t)\|_{\bL^{q/(q-3)}}\Big\}
 \\
 \hspace{1cm}
 +\frac{C_{gn} C^{2,\infty}_{iny}}{\alpha}
 \Big( C_{\dot{\epsilon}}\Uppi_1+C_{poi}\Big)
 \| \bm(\cdot,t)\|_{\bH^2}\|\nabla \bu(\cdot,t)\|_{\bL^2}
 \|\nabla\rho(\cdot,t)\|_{\bL^q},
\end{eqnarray*}
for $\dot{\epsilon}>0$, 
which implies \eqref{ht_and_rt_estimate} by 
defining 
\begin{align}
&\Uppi_4=\frac{\beta}{\alpha}\|\bm_t(\cdot,t)\|_{\bL^2},
\label{eq:uppi_cuatro}
\\
&\Uppi_5=
\frac{C_{gn}C^{2,\infty}_{iny}}{\alpha}
 \Big(\dot{\epsilon}\Uppi_2+
 \Big(C_{\dot{\epsilon}}\Uppi_1+C_{poi}\Big)
 \|\bm(\cdot,t)\|_{\bH^2}
 \Big),
 \label{eq:uppi_cinco}
 \\
&\Uppi_6=
\frac{C_{gn}C^{2,\infty}_{iny}}{\alpha}
 \;\dot{\epsilon}\;\Uppi_3.
\label{eq:uppi_seis}
\end{align}

Finally, by hypothesis (H$_4$) and \eqref{eq:uppi_uno}, \eqref{eq:uppi_dos}-\eqref{eq:uppi_seis}
we note that $\Uppi_i$ are well defined and also are independents of $f$ and $g$
and conclude the proof of lemma.
\end{proof}

\begin{lemma}
\label{prop:auxiliar_stokes_elliptic_results}
There exists  $\Upsilon_i\in \R^+$ for $i\in\{1,\ldots,5\}$,
depending only on $\Omega,c_a,c_0,c_d,\alpha,\beta,\mu_r,\bm$ and $\bq$
(independents of $f$ and $g$), such that the following estimate holds:
\begin{eqnarray} 
&&\Upsilon_1\|\bu(\cdot,t)\|_{\bH^2}+
\|p(\cdot,t)\|_{H^1}+\Upsilon_1\|\bw(\cdot,t)\|_{\bH^2}
\nonumber
\\
&&
\quad
\le
\Upsilon_2
\Big[\|\nabla \bu(\cdot,t)\|^{3}_{\bL^2}+\|\nabla \bw(\cdot,t)\|^{3}_{\bL^2}\Big]
+
\Upsilon_3
\Big[\|\nabla \bu(\cdot,t)\|_{\bL^2}+\|\nabla \bw(\cdot,t)\|_{\bL^2}\Big]
\nonumber
\\
&&
\qquad
+\Upsilon_4
\Big[|f(t)|+|g(t)|\Big]
+\Upsilon_5
\Big[
\|\big(\sqrt{\rho} \bu_t\big)(\cdot,t)\|_{\bL^2}
+\|\big(\sqrt{\rho} \bw_t\big)(\cdot,t)\|_{\bL^2}
\Big]
,
\label{eq:regularity_u_p_w}
\end{eqnarray}
for all $t\in[0,T_*]$.
\end{lemma}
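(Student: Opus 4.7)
The plan is to view the momentum equations as stationary Stokes/elliptic systems at each fixed time $t$, and then apply standard regularity estimates to pass from an $L^2$ bound of the right-hand side to $\bH^2\times H^1$ estimates for $(\bu,p)$ and an $\bH^2$ estimate for $\bw$.

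First I would rewrite equation \eqref{eq:momento_lineal}. Using $\diver \bu=0$ and the continuity equation in conservative form $\rho_t+\diver(\rho \bu)=0$, the identity
\[
(\rho\bu)_t+\diver(\rho\bu\otimes\bu)=\rho\bu_t+\rho(\bu\cdot\nabla)\bu
\]
holds, so \eqref{eq:momento_lineal} becomes the Stokes problem
\[
-(\mu+\mu_r)\Delta\bu+\nabla p=-\rho\bu_t-\rho(\bu\cdot\nabla)\bu+2\mu_r\curl\bw+\rho\bF,\qquad \diver\bu=0,\qquad \bu|_{\partial\Omega}=0.
\]
Analogously, \eqref{eq:momento_angular} gives the strongly elliptic problem
\[
L\bw=-\rho\bw_t-\rho(\bu\cdot\nabla)\bw+2\mu_r\curl\bu+\rho\bG,\qquad \bw|_{\partial\Omega}=0.
\]
Standard Stokes regularity (see e.g.\ Cattabriga/Temam) provides a constant $C_1^{reg}$ with
\[
\|\bu\|_{\bH^2}+\|p\|_{H^1}\le C_1^{reg}\bigl\|\text{RHS}_{\bu}\bigr\|_{\bL^2},
\]
while elliptic regularity for $L$ (as in Theorem III.4.3 of~\cite{boyer_book}) yields $C_2^{reg}$ with $\|\bw\|_{\bH^2}\le C_2^{reg}\|\text{RHS}_{\bw}\|_{\bL^2}$.

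Next I would bound each term of the right-hand sides in $\bL^2$. The time-derivative terms are controlled by $\|\rho\bu_t\|_{\bL^2}\le\sqrt{\beta}\,\|\sqrt{\rho}\bu_t\|_{\bL^2}$ (and similarly for $\bw_t$), using Lemma~\ref{lema:aprioriestimatesforrho}. The linear cross terms $2\mu_r\curl\bw$ and $2\mu_r\curl\bu$ contribute the $\|\nabla\bw\|_{\bL^2}$ and $\|\nabla\bu\|_{\bL^2}$ pieces into $\Upsilon_3$. For the forcing, I use the Helmholtz structure \eqref{eq:helmholtz_f_uno} to write $\|\rho\bF\|_{\bL^2}\le\beta|f(t)|(\|\nabla h\|_{\bL^2}+\|\bm\|_{\bL^2})$ and invoke \eqref{hx_and_rx_estimate} together with hypothesis (H$_4$) to control this by $C|f(t)|$; an identical bound gives $\|\rho\bG\|_{\bL^2}\le C|g(t)|$. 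These produce $\Upsilon_4[|f(t)|+|g(t)|]$.

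The main obstacle is the convective terms $\rho(\bu\cdot\nabla)\bu$ and $\rho(\bu\cdot\nabla)\bw$, since they are quadratic and must end up as lower-order-type quantities with an $H^2$ factor that can be absorbed on the left. For this I would use the three-dimensional Gagliardo–Nirenberg interpolation
\[
\|\nabla\bu\|_{\bL^3}\le C\,\|\nabla\bu\|_{\bL^2}^{1/2}\,\|\nabla\bu\|_{\bL^6}^{1/2}\le C\,\|\nabla\bu\|_{\bL^2}^{1/2}\,\|\bu\|_{\bH^2}^{1/2},
\]
combined with $\|\bu\|_{\bL^6}\le C\|\nabla\bu\|_{\bL^2}$, to obtain
\[
\|\rho(\bu\cdot\nabla)\bu\|_{\bL^2}\le\beta\|\bu\|_{\bL^6}\|\nabla\bu\|_{\bL^3}\le C\,\|\nabla\bu\|_{\bL^2}^{3/2}\,\|\bu\|_{\bH^2}^{1/2},
\]
and similarly $\|\rho(\bu\cdot\nabla)\bw\|_{\bL^2}\le C\,\|\nabla\bu\|_{\bL^2}\,\|\nabla\bw\|_{\bL^2}^{1/2}\,\|\bw\|_{\bH^2}^{1/2}$. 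A Young inequality of the form $ab^{1/2}\le\varepsilon b+C_\varepsilon a^2$ then splits these into a piece of the form $\varepsilon(\|\bu\|_{\bH^2}+\|\bw\|_{\bH^2})$ (to be absorbed on the left) and a piece bounded by $C(\|\nabla\bu\|_{\bL^2}^3+\|\nabla\bw\|_{\bL^2}^3)$, giving the $\Upsilon_2$ term.

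Finally, I would add the Stokes and elliptic inequalities, choose $\varepsilon$ sufficiently small so that the $\varepsilon\|\bu\|_{\bH^2}$ and $\varepsilon\|\bw\|_{\bH^2}$ contributions coming from the convective terms are absorbed into the left-hand side, and collect all constants into $\Upsilon_1,\dots,\Upsilon_5$. Independence of $f$ and $g$ in each $\Upsilon_i$ follows because the only $f,g$-dependence appears explicitly in the $\Upsilon_4[|f|+|g|]$ term; the other bounds depend only on $\Omega,c_a,c_0,c_d,\alpha,\beta,\mu_r,\bm,\bq$ via Lemma~\ref{lema:aprioriestimatesforrho}, the embedding/regularity constants and (H$_4$), which completes the proof of \eqref{eq:regularity_u_p_w}.
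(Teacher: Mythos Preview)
Your proposal is correct and follows essentially the same route as the paper: rewrite \eqref{eq:momento_lineal}--\eqref{eq:momento_angular} at each fixed $t$ as a Stokes problem for $(\bu,p)$ and a strongly elliptic problem for $\bw$, apply the corresponding $H^2$ regularity estimates, bound the convective terms via $\|\bu\|_{\bL^6}\|\nabla\bu\|_{\bL^3}\le C\|\nabla\bu\|_{\bL^2}^{3/2}\|\bu\|_{\bH^2}^{1/2}$, and absorb the $\bH^2$ factors on the left with Young's inequality. The only points the paper makes slightly more explicit are the extra lower-order term $\|\bw\|_{\bL^2}$ coming from the zeroth-order part of $L$ (handled via Poincar\'e and folded into $\Upsilon_3$) and the precise tracking of the absorption constant $\Upsilon_1=1-\epsilon^*C_M$, whose positivity for a specific range of $\epsilon^*$ (tied to hypothesis~(H$_0$)) is needed later in Lemma~\ref{lem:u_ut_w_wt_estimate}.
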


\begin{proof}
The inequality  \eqref{eq:regularity_u_p_w} is a consequence of the regularity
of solutions for the Stokes system satisfied by $\bu$ and $p$ and the uniformly
elliptic  equation satisfied by $\bw$. Indeed, we first note that
the equations \eqref{eq:momento_lineal},
\eqref{eq:incompresibilidad} and \eqref{eq:helmholtz_f_uno}
imply that $\bu$ and $p$ satisfy the Stokes problem given by the equation
\begin{eqnarray}
-(\mu +\mu_r)\Delta \bu+\nabla p
	=2\mu_r \curl \bw+\rho f(\nabla h-\bm)
	-\rho \bu_t-\rho (\bu\cdot\nabla )\bu,
	\quad\mbox{in}\quad Q_T,
\label{eq:stokes_problem}
\end{eqnarray}
where the incompressibility condition is given by  \eqref{eq:incompresibilidad}
and the initial and boundary conditions are given by
\eqref{eq:direct_problem_ic} and \eqref{eq:direct_problem_bc}, respectively. 
Hence, by applying the result given in \cite{temam_1977}
for the regularity of the 
solutions for stokes equation, 
the Minkowski and H\"older  inequalities, we deduce that 
\begin{eqnarray}
&& \|\bu(\cdot,t)\|_{\bH^2}+\|p(\cdot,t)\|_{H^1}
\nonumber
\\
&&
\qquad
\le C^{reg}_1\; \Big[
2\mu_r \|\curl \bw(\cdot,t)\|_{\bL^2}
+\|\big(\rho f(\nabla h-\bm)\big)(\cdot,t)\|_{\bL^2}
+\|(\rho \bu_t)(\cdot,t)\|_{\bL^2}
\nonumber
\\
&&
\qquad\quad
+\|\rho ((\bu\cdot\nabla )\bu)(\cdot,t)\|_{\bL^2}\Big]
\nonumber
\\
&&
\qquad
\le C^{reg}_1\; \Big[
2\mu_r \|\nabla \bw(\cdot,t)\|_{\bL^2}
+|f(t)|\|\rho (\cdot,t)\|_{L^2}
\Big(\|\nabla h(\cdot,t)\|_{\bL^2}+\|\bm(\cdot,t)\|_{\bL^2}\Big)
\nonumber
\\
&&
\qquad\quad
+\|(\rho \bu_t)(\cdot,t)\|_{\bL^2}
+\|\rho(\cdot,t)\|_{L^\infty}\|\bu(\cdot,t)\|_{\bL^6}\|\nabla \bu(\cdot,t)\|_{\bL^3}\Big],
\label{eq:stokes_for_uuu}
\end{eqnarray}
where $C^{reg}_1$ is a positive constant depending on $\mu,\mu_r$
and $\Omega$. In the second place,
by \eqref{eq:momento_angular}, \eqref{eq:helmholtz_f_uno} and \eqref{eq:def_of_L_and_L0},
we deduce that $\bw$ satisfies the following equation
\begin{eqnarray}
L\bw
	=2\mu_r \curl \bu+\rho g \bq
	-\rho \bw_t-\rho (\bu\cdot\nabla )\bw,
	\quad\mbox{in}\quad Q_T.
\end{eqnarray}
Then by the regularity results for  the 
solutions for uniformly elliptic  equations (see for instance \cite{evans_book}), 
the Minkowski and H\"older  inequalities, and \eqref{eq:poi}, we have that 
\begin{eqnarray}
 \|\bw(\cdot,t)\|_{\bH^2}
&\le& C^{reg}_2\; \Big[
2\mu_r \|\curl \bu(\cdot,t)\|_{\bL^2}
+\|\big(\rho g\bq\big)(\cdot,t)\|_{\bL^2}
+\|(\rho \bw_t)(\cdot,t)\|_{\bL^2}
\nonumber
\\
&&
+\|\rho ((\bw\cdot\nabla )\bw)(\cdot,t)\|_{\bL^2}
+\|\bw(\cdot,t)\|_{\bL^2}\Big]
\nonumber
\\
&\le& C^{reg}_2\; \Big[
2\mu_r \|\nabla \bu(\cdot,t)\|_{\bL^2}
+|g(t)|\|\rho (\cdot,t)\|_{L^2}\|\bq(\cdot,t)\|_{\bL^2}
+\|(\rho \bw_t)(\cdot,t)\|_{\bL^2}
\nonumber
\\
&&
+\|\rho(\cdot,t)\|_{L^2}\|\bu(\cdot,t)\|_{\bL^6}\|\nabla\bw(\cdot,t)\|_{\bL^3}
+C_{poi}\|\nabla \bw(\cdot,t)\|_{\bL^2}\Big],
\label{eq:regularity_for_Lw}
\end{eqnarray}
where $C^{reg}_2$ is a positive constant depending only on $\Omega$ and 
on the coefficients of $L$.
Now, we note that the second terms on the right hand sides 
of \eqref{eq:stokes_for_uuu}
and \eqref{eq:regularity_for_Lw} can be bound
by application of 
Lemmas~\ref{lema:aprioriestimatesforrho}-\ref{lema:aprioriestimatesforhandr}
and \eqref{eq:gagnir}.
Hence, if we sum the bounded results,  we
obtain the following inequality
\begin{eqnarray}
&& \|\bu(\cdot,t)\|_{\bH^2}+\|p(\cdot,t)\|_{H^1}+\|\bw(\cdot,t)\|_{\bH^2}
\nonumber
\\&&
\qquad
\le
\max\{C^{reg}_1,C^{reg}_2\}\;\Big\{
(2\mu_r +C_{poi}) \;\|\nabla \bw(\cdot,t)\|_{\bL^2}+2\mu_r\|\nabla \bu(\cdot,t)\|_{\bL^2}
\nonumber
\\&&
\qquad\quad
+\beta |\Omega|^{1/2}
\;
\Big[|f(t)|\Big(\Uppi_1+\|\bm(\cdot,t)\|_{\bL^2}\Big)+|g(t)|\|\bq(\cdot,t)\|_{\bL^2}\Big]
\nonumber
\\&&
\qquad\quad
+
(\beta)^{1/2}
\Big[
\|\big(\sqrt{\rho} \bu_t\big)(\cdot,t)\|_{\bL^2}
+\|\big(\sqrt{\rho} \bw_t\big)(\cdot,t)\|_{\bL^2}
\Big]
+
\beta |\Omega|^{1/2}C_{gn}C_{poi}\times
\nonumber
\\&&
\qquad\qquad\quad
\Big[\|\nabla \bu(\cdot,t)\|^{3/2}_{\bL^2}\|\bu(\cdot,t)\|^{1/2}_{\bH^2}
+\|\nabla \bu(\cdot,t)\|_{\bL^2}\|\nabla \bw(\cdot,t)\|^{1/2}_{\bL^2}
\|\bw(\cdot,t)\|^{1/2}_{\bH^2}\Big]\Big\},
\hspace{1cm}
\label{eq:regularity_u_p_w_prev}
\end{eqnarray}
for all $t\in [0,T_*]$. 
Now, denoting $C_M=\max\{C^{reg}_1,C^{reg}_2\}\beta |\Omega|^{1/2}C_{gn}C_{poi}$, 
for $\epsilon^*\in\mathbb{R}^+$ 
 we define $\Upsilon_i$ for $i=1,\ldots,5$
as follows
\begin{eqnarray}
&&\Upsilon_1=(1-\epsilon^*C_M),
\quad
\Upsilon_2=(C_M+1)\;(2\epsilon^*)^{-1},
\label{eq:upsilon2}
\\&&
\Upsilon_3=(2\mu_r +C_{poi})\max\{C^{reg}_1,C^{reg}_2\},
\nonumber\\&&
\Upsilon_4=\beta |\Omega|^{1/2}\max\{C^{reg}_1,C^{reg}_2\}
\max\{\Uppi_1+\|\bm(\cdot,t)\|_{\bL^2}\;,\;\|\bq(\cdot,t)\|_{\bL^2}\},
\nonumber\\&&
\Upsilon_5=(\beta)^{1/2}\max\{C^{reg}_1,C^{reg}_2\}
\label{eq:upsilon5}
\end{eqnarray}
Now, let us consider the notation $\varpi=\|\rho_0\|_{\infty}\vartheta$
with $\vartheta$ defined in (H$_0$). Then,
by (H$_0$) we note that $C_M=\varpi|\Omega|^{1/2}$ 
and $]0,(1-\varpi)/C_M[\subset]0,1/C_M[$.
Thus, selecting $\epsilon^*\in ]0,(1-\varpi)/C_M[ $
and applying the Cauchy inequality with $\epsilon^*$ to the last two terms
of \eqref{eq:regularity_u_p_w_prev} and \eqref{eq:yung} with $p=3/2,q=3$ and $\epsilon=1$
to the product $\|\nabla \bu(\cdot,t)\|^2_{\bL^2}\|\nabla \bw(\cdot,t)\|_{\bL^2}$
we get~\eqref{eq:regularity_u_p_w}.
\end{proof}

\begin{remark}
\label{rem:epsilon}
In order to prove that $\Upsilon_1>0$ is enough to select $\epsilon^*\in ]0,1/C_M[$.
However, in the proof of Lemma~\ref{lem:u_ut_w_wt_estimate} we will need that   
$\epsilon^*\in ]0,(1-\varpi)/C_M[ $.
\end{remark}

\begin{lemma}
\label{lem:u_ut_w_wt_estimate}
There exists  $\Upxi_1,\Upxi_2,\Upxi_3\in \R^+$,
depending only on $\Omega,c_a,c_0,c_d,\alpha,\beta,\mu_r,\bm$ and $\bq$
(independents of $f$ and $g$), such that the following estimate holds:
\begin{eqnarray}
&&\frac{d}{dt}\Big(\|\nabla \bu(\cdot,t)\|^{2}_{\bL^2}
+\|\nabla \bw(\cdot,t)\|^{2}_{\bL^2}\Big)
+\|(\sqrt{\rho} \bv_t)(\cdot,t)\|^{2}_{\bL^2}
+\|(\sqrt{\rho} \bw_t)(\cdot,t)\|^{2}_{\bL^2}
\nonumber\\
&&
\hspace{1cm}
\le\Upxi_1
\Big[\|\nabla \bu(\cdot,t)\|^{6}_{\bL^2}
+\|\nabla \bw(\cdot,t)\|^{6}_{\bL^2}\Big]
\nonumber\\
&&
\hspace{1.3cm}
+\Upxi_2
\Big[\|\nabla \bu(\cdot,t)\|^{2}_{\bL^2}
+\|\nabla \bw(\cdot,t)\|^{2}_{\bL^2}\Big]
+\Upxi_3\Big[|f(t)|^{2}+|g(t)|^{2}\Big],
\hspace{1cm}
\label{lem:u_ut_w_wt_main_estimate}
\end{eqnarray}
for $t\in [0,T_{*}]$.
In particular,
there exists $T_1\in [0,T_*]$  and $\Uptheta:[0,T_1]\to\R^+$ independents of $f$ and $g$ such that
the following estimate holds 
\begin{eqnarray}
\|\bu(\cdot,t)\|_{\bH^1_0}+
\|\bw(\cdot,t)\|_{\bH^1_0}\le \Uptheta(t),
\label{eq:lem:u_ut_w_wt_estimate}
\end{eqnarray}
 for all $t\in [0,T_1].$
\end{lemma}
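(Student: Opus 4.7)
The strategy is a higher order energy estimate obtained by testing the momentum equations with the time derivatives $\bu_t$ and $\bw_t$, after which the pointwise control of $\rho$ from Lemma~\ref{lema:aprioriestimatesforrho} and the regularity estimate from Lemma~\ref{prop:auxiliar_stokes_elliptic_results} will allow us to close a differential inequality of cubic type for $\phi(t) := \|\nabla\bu(\cdot,t)\|^2_{\bL^2} + \|\nabla\bw(\cdot,t)\|^2_{\bL^2}$.

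\textbf{Step 1 (energy identity).} First I expand
\[
(\rho\bu)_t + \diver(\rho\bu\otimes\bu) = \rho_t\bu + \rho\bu_t + \diver(\rho\bu)\bu + \rho(\bu\cdot\nabla)\bu = \rho\bu_t + \rho(\bu\cdot\nabla)\bu,
\]
where I used \eqref{eq:ecuacion_continuidad} and \eqref{eq:incompresibilidad}. Multiplying \eqref{eq:momento_lineal} by $\bu_t$ and integrating, the pressure term vanishes because $\diver\bu_t = 0$, yielding
\[
\|\sqrt{\rho}\,\bu_t\|^2_{\bL^2} + \tfrac{\mu+\mu_r}{2}\tfrac{d}{dt}\|\nabla\bu\|^2_{\bL^2}
= -\,(\rho(\bu\cdot\nabla)\bu,\bu_t) + 2\mu_r(\curl\bw,\bu_t) + (\rho\bF,\bu_t).
\]
The analogous calculation on \eqref{eq:momento_angular} tested by $\bw_t$, using the self-adjointness of $L_0$ and the definition \eqref{eq:def_of_L_and_L0}, gives the corresponding identity for $\bw$ with $\tfrac{d}{dt}$ acting on a coercive quadratic form equivalent to $\|\nabla\bw\|^2_{\bL^2} + \|\bw\|^2_{\bL^2}$.

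\textbf{Step 2 (bounding the nonlinear and forcing terms).} The key term is the convective one. Using H\"older, the Sobolev embedding $\bH^1_0\hookrightarrow\bL^6$, \eqref{eq:gagnir} with $q$ chosen so that $\|\nabla\bu\|_{\bL^3}\le C_{gn}\|\nabla\bu\|^{1/2}_{\bL^2}\|\bu\|^{1/2}_{\bH^2}$, and Lemma~\ref{lema:aprioriestimatesforrho}, I estimate
\[
|(\rho(\bu\cdot\nabla)\bu,\bu_t)| \le \tfrac{\beta}{\sqrt{\alpha}}\,\|\bu\|_{\bL^6}\|\nabla\bu\|_{\bL^3}\|\sqrt{\rho}\,\bu_t\|_{\bL^2} \le C\,\|\nabla\bu\|^{3/2}_{\bL^2}\|\bu\|^{1/2}_{\bH^2}\|\sqrt{\rho}\,\bu_t\|_{\bL^2}.
\]
I then invoke Lemma~\ref{prop:auxiliar_stokes_elliptic_results} to replace $\|\bu\|_{\bH^2}$ (and analogously $\|\bw\|_{\bH^2}$ for the $\bw$-equation) by the right-hand side of \eqref{eq:regularity_u_p_w}, producing terms of the form $\|\nabla\bu\|^3, \|\nabla\bw\|^3, |f|, |g|, \|\sqrt{\rho}\,\bu_t\|, \|\sqrt{\rho}\,\bw_t\|$ and $\|\nabla\bu\|, \|\nabla\bw\|$. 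Repeated application of Young's inequality \eqref{eq:yung} converts these into $\|\nabla\bu\|^6, \|\nabla\bw\|^6$, $\|\nabla\bu\|^2, \|\nabla\bw\|^2$, $|f|^2, |g|^2$, and $\|\sqrt{\rho}\,\bu_t\|^2, \|\sqrt{\rho}\,\bw_t\|^2$. The coupling term $2\mu_r(\curl\bw,\bu_t)$ is handled by Cauchy's inequality, while the forcing $(\rho\bF,\bu_t) = (\rho f(\nabla h - \bm),\bu_t)$ is controlled using Lemma~\ref{lema:aprioriestimatesforhandr} (the bound \eqref{hx_and_rx_estimate} on $\|\nabla h\|_{\bL^2}$) and hypothesis (H$_4$), giving a term of order $|f|\|\sqrt{\rho}\,\bu_t\|$ that also splits via Young into $|f|^2$ and absorbable $\|\sqrt{\rho}\,\bu_t\|^2$. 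The same treatment applies to the $\bw$-equation.

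\textbf{Step 3 (absorption and summation).} Summing the two inequalities and collecting all $\|\sqrt{\rho}\,\bu_t\|^2$ and $\|\sqrt{\rho}\,\bw_t\|^2$ contributions on the right-hand side, the Young parameters $\epsilon^*$ (as in the proof of Lemma~\ref{prop:auxiliar_stokes_elliptic_results} and Remark~\ref{rem:epsilon}) are chosen so that hypothesis (H$_0$) guarantees $\epsilon^*C_M < 1-\varpi < 1$; this precisely provides enough slack to absorb all such terms into the left. The resulting inequality is \eqref{lem:u_ut_w_wt_main_estimate} with constants $\Upxi_1,\Upxi_2,\Upxi_3$ depending only on the stated data.

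\textbf{Step 4 (local-in-time bound via ODE comparison).} Setting $\phi(t) = \|\nabla\bu(\cdot,t)\|^2_{\bL^2} + \|\nabla\bw(\cdot,t)\|^2_{\bL^2}$, \eqref{lem:u_ut_w_wt_main_estimate} gives $\phi'(t) \le \Upxi_1\phi(t)^3 + \Upxi_2\phi(t) + \Upxi_3(|f(t)|^2 + |g(t)|^2)$. Under (H$_2$)--(H$_3$), $\phi(0)$ is finite, and since $f,g\in H^1(0,T)\hookrightarrow C([0,T])$ the forcing is continuous. The scalar ODE $y' = \Upxi_1 y^3 + \Upxi_2 y + \Upxi_3(|f|^2+|g|^2)$, $y(0)=\phi(0)$, admits a local solution on some $[0,T_1]$ with $T_1>0$; by a standard comparison argument $\phi(t)\le y(t)$ on $[0,T_1]$, and defining $\Uptheta(t) := \sqrt{2y(t)}$ yields \eqref{eq:lem:u_ut_w_wt_estimate}.

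\textbf{Main obstacle.} The principal difficulty is the circular dependence introduced by the convective term $(\rho(\bu\cdot\nabla)\bu,\bu_t)$: controlling it requires $\|\bu\|_{\bH^2}$, which by Lemma~\ref{prop:auxiliar_stokes_elliptic_results} itself depends on $\|\sqrt{\rho}\,\bu_t\|_{\bL^2}$ and on cubic powers of $\|\nabla\bu\|, \|\nabla\bw\|$. Closing the estimate therefore hinges on the quantitative smallness built into hypothesis (H$_0$), which is exactly what ensures that the combined Young-type absorption of $\|\sqrt{\rho}\,\bu_t\|^2$ and $\|\sqrt{\rho}\,\bw_t\|^2$ leaves a strictly positive coefficient on the left-hand side.
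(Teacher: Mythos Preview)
Your approach is essentially that of the paper: test \eqref{eq:momento_lineal} and \eqref{eq:momento_angular} by $\bu_t$ and $\bw_t$, control the convective terms $(\rho(\bu\cdot\nabla)\bu,\bu_t)$ and $(\rho(\bu\cdot\nabla)\bw,\bw_t)$ through the Gagliardo--Nirenberg interpolation \eqref{eq:gagnir} combined with the Stokes/elliptic regularity estimate of Lemma~\ref{prop:auxiliar_stokes_elliptic_results}, then use the smallness hypothesis (H$_0$) exactly as in Remark~\ref{rem:epsilon} to absorb the $\|\sqrt{\rho}\,\bu_t\|^2_{\bL^2}$ and $\|\sqrt{\rho}\,\bw_t\|^2_{\bL^2}$ contributions. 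The paper organizes the right-hand side into four blocks $J_1,\dots,J_4$ and tracks the constants explicitly, but the logic is the same as yours.

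There is one point in your Step~4 that does not quite deliver what the statement asks. You build the comparison function from the ODE $y' = \Upxi_1 y^3 + \Upxi_2 y + \Upxi_3(|f(t)|^2+|g(t)|^2)$, so the resulting $y$, and hence your $\Uptheta(t)=\sqrt{2y(t)}$ and the blow-up time $T_1$, depend on the particular $f,g$; the lemma, however, asserts that $T_1$ and $\Uptheta$ are \emph{independent} of $f$ and $g$. The paper closes this step by invoking Lemma~3 of \cite{heywood_1980} and records that $\Uptheta$ depends only on $\Upxi_1,\Upxi_2,\Upxi_3$ and the initial data $\|\nabla\bu_0\|_{\bL^2},\|\nabla\bw_0\|_{\bL^2}$. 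To match this you should either replace the nonautonomous forcing by a uniform constant (which requires an a priori bound on $\|(f,g)\|_{L^\infty(0,T)}$ coming from the ambient hypothesis $(f,g)\in[H^1(0,T)]^2$), or else argue via Heywood's lemma directly; in either case make explicit why the resulting $T_1$ and $\Uptheta$ do not vary with $f,g$.
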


\begin{proof}
Testing the equations \eqref{eq:momento_lineal} and \eqref{eq:momento_angular} by $\bu_t$ 
and $\bw_t$, respectively;  
 summing the results; and applying the Minkowski and H\"older inequalities, we get
 \begin{eqnarray}
&&\frac{(\mu+\mu_r)}{2}\frac{d}{dt}\int_{\Omega}|\nabla \bu(\bx,t)|^2d\bx
+\frac{(c_0+2c_d)}{2}\frac{d}{dt}\int_{\Omega}|\nabla \bw(\bx,t)|^2d\bx
+\int_{\Omega}\Big(\rho |\bu_t|^2\Big)(\bx,t)d\bx
\nonumber\\&&
\qquad
+\int_{\Omega}\Big(\rho |\bw_t|^2\Big)(\bx,t)d\bx
=-\int_{\Omega}\Big(\rho (\bu\cdot\nabla )\bu\cdot\bu_t\Big)(\bx,t) d\bx
+2\mu_r\int_{\Omega}\Big(\curl \bw\,\cdot \bu_t\Big)(\bx,t) d\bx
\nonumber\\&&
\qquad
+ f(t)\int_{\Omega}\Big(\rho(\nabla h-\bm)\cdot\bu_t\Big)(\bx,t) d\bx
-\int_{\Omega}\Big(\rho (\bu\cdot\nabla )\bw\cdot\bw_t\Big)(\bx,t) d\bx
\nonumber\\&&
\qquad
+2\mu_r\int_{\Omega}\Big(\curl \bu\;\cdot \bw_t \Big)(\bx,t) d\bx+ g(t)
\int_{\Omega}\Big(\rho\bq\cdot\bw_t\Big)(\bx,t) d\bx
-4\mu_r\int_{\Omega}\Big(\bw\cdot\bw_t\Big)(\bx,t) d\bx
\nonumber\\&&
\qquad
\le
\Big[
\|(\rho \bu_t)(\cdot,t)\|_{\bL^2}\|\bu(\cdot,t)\|_{\bL^6}\|\nabla \bu(\cdot,t)\|_{\bL^3}
	+\|(\rho \bw_t)(\cdot,t)\|_{\bL^2}\|\bu(\cdot,t)\|_{\bL^6}
	\|\nabla \bw(\cdot,t)\|_{\bL^3}
\Big]
\nonumber\\&&
\qquad\quad
	+\Big[
	2\mu_r\Big\{
	\|\nabla \bw(\cdot,t)\|_{\bL^2} \|\bu_t (\cdot,t)\|_{\bL^2}
	+\|\nabla \bu(\cdot,t)\|_{\bL^2} \|\bw_t (\cdot,t)\|_{\bL^2}\Big\}
	\Big]
\nonumber\\&&
\qquad\quad
	+\Big[
	|f(t)|\|(\rho \bu_t)(\cdot,t)\|_{\bL^2}
	\Big(\|\nabla h(\cdot,t)\|_{\bL^2}+\|\bm(\cdot,t)\|_{\bL^2}\Big)
	+
	|g(t)|\|(\rho \bw_t)(\cdot,t)\|_{\bL^2}\|\bq(\cdot,t)\|_{\bL^2}
	\Big]
\nonumber\\&&
\qquad\quad
	+\Big[
	4\mu_r\|\bw(\cdot,t)\|_{\bL^2}\|\bw_t(\cdot,t)\|_{\bL^2}
	\Big]
	:=\sum_{i=1}^4J_i,
\label{eq:ml_and_ma_times_ut_wt}
\end{eqnarray}
where each $J_i$ are defined by the corresponding brackets $\big[\quad\big]$.
Now, we will prove the estimate by getting  
some bounds for each $J_i$ and then applying a Gronwall type inequality.

The bound for $J_1$. By Lemma~\ref{lema:aprioriestimatesforrho}, 
inequalities \eqref{eq:poi}, \eqref{eq:gagnir}, \eqref{eq:yung},  and 
Lemma~\ref{prop:auxiliar_stokes_elliptic_results}, we deduce that
\begin{eqnarray}
J_1
&\le&
\sqrt{\beta} C_{poi} C_{gn}
\Big[
\|(\sqrt{\rho} \bu_t)(\cdot,t)\|_{\bL^2}\|\nabla \bu(\cdot,t)\|^{3/2}_{\bL^2}
\| \bu(\cdot,t)\|^{1/2}_{\bH^2}
\nonumber
\\&&
\hspace{2cm}
+\|(\sqrt{\rho} \bw_t)(\cdot,t)\|_{\bL^2}
\|\nabla \bu(\cdot,t)\|_{\bL^2}
\|\nabla \bw(\cdot,t)\|^{1/2}_{\bL^2}\| \bw(\cdot,t)\|^{1/2}_{\bH^2}\Big]
\nonumber
\\
&\le& 
\frac{\sqrt{\beta} C_{poi} C_{gn}}{2}\Big\{
\|(\sqrt{\rho} \bu_t)(\cdot,t)\|_{\bL^2}\;
\Big[\|\nabla \bu(\cdot,t)\|^{3}_{\bL^2}+\| \bu(\cdot,t)\|_{\bH^2}\Big]
\nonumber
\\&&
\hspace{2cm}
+
\|(\sqrt{\rho} \bw_t)(\cdot,t)\|_{\bL^2}
\Big[\|\nabla  \bu(\cdot,t)\|^{3}_{\bL^2}+
\|\nabla  \bw(\cdot,t)\|^{3}_{\bL^2}+\| \bw(\cdot,t)\|_{\bH^2}\Big]
\Big\}
\nonumber
\\
&\le&
\Big(\Uppsi_1 \overline{\epsilon}+\Uppsi_2\Big)
\Big[\|(\sqrt{\rho} \bu_t)(\cdot,t)\|^{2}_{\bL^2}
+\|(\sqrt{\rho} \bw_t)(\cdot,t)\|^{2}_{\bL^2}\Big]
\nonumber
\\&&
\qquad
+\Uppsi_3
\Big[\|\nabla \bu(\cdot,t)\|^{6}_{\bL^2}
+\|\nabla \bw(\cdot,t)\|^{6}_{\bL^2}\Big]
+\Uppsi_4
\Big[\|\nabla \bu(\cdot,t)\|^{2}_{\bL^2}
+\|\nabla \bw(\cdot,t)\|^{2}_{\bL^2}\Big]
\nonumber
\\&&
\qquad
+\Uppsi_5\Big[|f(t)|^{2}+|g(t)|^{2}\Big],
\label{eq:inequality_J1}
\end{eqnarray}
where $\overline{\epsilon}>0$ and
\begin{eqnarray}
&&\Uppsi_1=\sqrt{\beta} C_{poi} C_{gn}
	\Big(1+2\frac{\Upsilon_2}{\Upsilon_1}+\frac{\Upsilon_3}{\Upsilon_1}
	+\frac{\Upsilon_4}{\Upsilon_1}\Big),
\quad
\Uppsi_2=\sqrt{\beta} C_{poi} C_{gn}\frac{\Upsilon_5}{\Upsilon_1},
\label{eq:upsi2}
\\&&
\Uppsi_3=\frac{\sqrt{\beta} C_{poi} C_{gn}}{4\overline{\epsilon}}
	\Big(1+2\frac{\Upsilon_2}{\Upsilon_1}\Big),
	\quad
	\Uppsi_4=\frac{\sqrt{\beta} C_{poi} C_{gn}}{4\overline{\epsilon}}
	\frac{\Upsilon_3}{\Upsilon_1},
\nonumber\\&&
\Uppsi_6=\frac{\sqrt{\beta} C_{poi} C_{gn}}{4\overline{\epsilon}}
\frac{\Upsilon_4}{\Upsilon_1}.
\nonumber
\end{eqnarray}
Now, for $J_i$, $i=2,3,4$, by inequality \eqref{eq:poi},
Lemmas~\ref{lema:aprioriestimatesforrho}-\ref{lema:aprioriestimatesforhandr} 
and Young inequality, we deduce that
\begin{eqnarray}
&&J_2\le
\frac{4\mu_r\overline{\epsilon}}{\alpha}\Big[
\|(\sqrt{\rho} \bu_t)(\cdot,t)\|^{2}_{\bL^2}
	+
	\|(\sqrt{\rho} \bw_t)(\cdot,t)\|^{2}_{\bL^2}\Big]
\nonumber
\\&&
\qquad
	+\frac{\mu_r}{\overline{\epsilon}}
	\Big[\|\nabla \bu(\cdot,t)\|^{2}_{\bL^2}
	+\|\nabla \bw(\cdot,t)\|^{2}_{\bL^2}\Big],
\label{eq:inequality_J2}
\\&&
J_3\le
\sqrt{\beta}( \Uppi_1
	+\|\bm(\cdot,t)\|_{\bL^2})
	\|(\sqrt{\rho} \bu_t)(\cdot,t)\|_{\bL^2}|f(t)|
	+
	\sqrt{\beta}\|\bq(\cdot,t)\|_{\bL^2}
	\|(\sqrt{\rho} \bw_t)(\cdot,t)\|_{\bL^2}
	|g(t)|
\nonumber
\\&&
\hspace{0.5cm}
\le
\sqrt{\beta}\max\Big\{ \Uppi_1+\|\bm(\cdot,t)\|_{\bL^2}),\|\bq(\cdot,t)\|_{\bL^2}\Big\}
\overline{\epsilon}
\Big[\|(\sqrt{\rho} \bu_t)(\cdot,t)\|^2_{\bL^2}+\|(\sqrt{\rho} \bw_t)(\cdot,t)\|^2_{\bL^2}\Big]
\nonumber
\\&&
\hspace{0.5cm}
+\frac{\sqrt{\beta}}{4\overline{\epsilon}}
\max\Big\{ \Uppi_1+\|\bm(\cdot,t)\|_{\bL^2}),\|\bq(\cdot,t)\|_{\bL^2}\Big\}
\Big[|f(t)|^2+|g(t)|^2\Big],
\label{eq:inequality_J3}
\\&&
J_4\le
\frac{4\mu_r C_{poi}\overline{\epsilon}}{\alpha}
\|(\sqrt{\rho} \bw_t)(\cdot,t)\|^{2}_{\bL^2}
+
\frac{\mu_r C_{poi}}{\alpha\overline{\epsilon}}
\|\nabla \bw(\cdot,t)\|^{2}_{\bL^2}.
\label{eq:inequality_J4}
\end{eqnarray}
Inserting \eqref{eq:inequality_J1}-\eqref{eq:inequality_J4} in 
\eqref{eq:ml_and_ma_times_ut_wt} we get the estimate
\begin{eqnarray}
&&\frac{(\mu+\mu_r)}{2}\frac{d}{dt}\|\nabla \bu(\cdot,t)\|^2_{\bL^2}
+\frac{(c_0+2c_d)}{2}\frac{d}{dt}\|\nabla \bw(\cdot,t)\|^2_{\bL^2}
\nonumber\\
&&
\qquad
+
\Big(
1-\Uppsi_2-\Big\{\Uppsi_2+\frac{4\mu_r}{\alpha}
+\sqrt{\beta}\max\Big\{ \Uppi_1+\|\bm(\cdot,t)\|_{\bL^2}),\|\bq(\cdot,t)\|_{\bL^2}\Big\}
+\frac{4\mu_rC_{poi}}{\alpha}\Big\}
\overline{\epsilon}
\Big)\times
\nonumber\\
&&
\qquad
\Big[\|(\sqrt{\rho}\bu_t)(\cdot,t)\|^2_{\bL^2}
+\|(\sqrt{\rho}\bw_t)(\cdot,t)\|^2_{\bL^2}\Big]
\nonumber
\\&&
\qquad
\le
\Big(\Uppsi_3+\frac{\mu_rC_{poi}}{\alpha \overline{\epsilon}}\Big)
\Big[
\|\bu(\cdot,t)\|^6_{\bL^2}
+\|\bw(\cdot,t)\|^6_{\bL^2}
\Big]
\nonumber
\\&&
\qquad\quad
+
\Big(\Uppsi_4+\frac{\mu_r}{\overline{\epsilon}}
+\frac{\mu_rC_{poi}}{\alpha\overline{\epsilon}}\Big)
\Big[
\|\bu(\cdot,t)\|^2_{\bL^2}
+\|\bw(\cdot,t)\|^2_{\bL^2}
\Big]
\nonumber\\&&
\qquad\quad
+\Big(\Uppsi_4+\frac{1}{4\overline{\epsilon}}
\sqrt{\beta}\max\Big\{ \Uppi_1+\|\bm(\cdot,t)\|_{\bL^2}),\|\bq(\cdot,t)\|_{\bL^2}\Big\}
\Big)
	\Big[|f(t)|^2+|g(t)|^2\Big].
\label{eq:estimacionpenlemauh1wh1}
\end{eqnarray}
Now, by (H$_0$), \eqref{eq:upsilon2}, \eqref{eq:upsilon5}
and the fact that $\epsilon^*\in ]0,(1-\varpi)/C_M[ $ we have that
$\varpi<1-\epsilon^* \varpi |\Omega|^{1/2}<1$, which implies that
$\Upsilon_1>\sqrt{\beta}C_{gn}C_{poi}\Upsilon_5$ or equivalently, by
using the definition of $ \Uppi_2$ given on \eqref{eq:upsi2},
we have that $1-\Uppsi_2>0.$  Then, we can select $\overline{\epsilon}$
on the interval $]0,(1-\Uppsi_2){\mathcal{E}}^{-1}[$ with
\begin{eqnarray*}
\mathcal{E}
=
\Big\{\Uppsi_2+\frac{4\mu_r}{\alpha}
+\sqrt{\beta}\max\Big\{ \Uppi_1+\|\bm(\cdot,t)\|_{\bL^2},\|\bq(\cdot,t)\|_{\bL^2}\Big\}
+\frac{4\mu_rC_{poi}}{\alpha}\Big\}
\end{eqnarray*}
such that all coefficients in \eqref{eq:estimacionpenlemauh1wh1}
are positive. Thus defining
\begin{eqnarray*}
 \Upxi_1&=&\Big(\Uppsi_3+\frac{\mu_rC_{poi}}{\alpha \overline{\epsilon}}\Big)
 \mathcal{C}^{-1},
 \qquad
 \Upxi_2=
 \Big(\Uppsi_4+\frac{\mu_r}{\overline{\epsilon}}
+\frac{\mu_rC_{poi}}{\alpha\overline{\epsilon}}\Big)
 \mathcal{C}^{-1},
\\
\Upxi_3&=&
\Big(\Uppsi_4+\frac{1}{4\overline{\epsilon}}
\sqrt{\beta}\max\Big\{ \Uppi_1+\|\bm(\cdot,t)\|_{\bL^2},\|\bq(\cdot,t)\|_{\bL^2}\Big\}
\Big)
\mathcal{C}^{-1}
\end{eqnarray*}
with 
$
\mathcal{C}=\min\Big\{
2^{-1}(\mu+\mu_r),
2^{-1}(c_0+2c_d),
1-\Uppsi_2-\mathcal{E}
\overline{\epsilon}
\Big\}$
we deduce that the inequality
\eqref{lem:u_ut_w_wt_main_estimate} is valid. 

By the application of  Lemma~3 given on \cite{heywood_1980}, we deduce the
existence of $T_1\in [0,T_*]$ depending on $\|\nabla \bu(\cdot,0)\|_{\bL^2}$ and
$\|\nabla \bw(\cdot,0)\|_{\bL^2}$ such  the estimate
\eqref{eq:lem:u_ut_w_wt_estimate} holds
with $\Uptheta$ depending only on 
$\Upxi_1,\Upxi_2,\Upxi_3,\|\nabla \bu(\cdot,0)\|_{\bL^2}$ and 
$\|\nabla \bw(\cdot,0)\|_{\bL^2}$.
\end{proof}

\begin{lemma}
\label{lem:estimates3}
There exists $\Uppsi_i\in\R^+$, $i=1,\ldots,4,$ depending only on
$\Omega,c_a,c_0,c_d,\alpha,\beta,\mu_r,\bm$ and $\bq$
(independents of $f$ and $g$)
such that
the following estimate holds 
\begin{eqnarray}
&&\frac{d}{dt}\left(\|(\sqrt{\rho} \bu_t)(\cdot,t)\|^2_{\bL^2}
+\|(\sqrt{\rho} \bw_t)(\cdot,t)\|^2_{\bL^2}\right)
+\| \nabla \bu_t(\cdot,t)\|^2_{\bL^2}
+\| \nabla \bw_t(\cdot,t)\|^2_{\bL^2} 
\nonumber
\\&&
\qquad\quad
\le 
\Uppsi_1
\Big(
1+\max\Big\{\Big(\|\nabla h_t(\cdot,t)\|_{\bL^2}+1\Big)^2,1\Big\}
+\|\nabla\rho(\cdot,t)\|^{4q/(3q-6)}_{\bL^q}
\Big)
\Big[\|(\sqrt{\rho} \bu_t)(\cdot,t)\|^2_{\bL^2}
+\|(\sqrt{\rho} \bw_t)(\cdot,t)\|^2_{\bL^2}\Big]
\nonumber
\\&&
\qquad\qquad
+
\Uppsi_2
\Big(
1+\max\Big\{(\|h(\cdot,t)\|_{\bH^2},1\Big\}
\|\nabla\rho(\cdot,t)\|^2_{\bL^q}
\Big)
\Big[\|f(t)\|^2+\|g(t)\|^2\Big]
\nonumber\\&&
\qquad\qquad
+
\Uppsi_3
\Big[\|f'(t)\|^2+\|g'(t)\|^2\Big]
+\Uppsi_4
\Big[
\|\bu(\cdot,t)\|^{6/q}_{\bH^2}
+\|\bw(\cdot,t)\|^{6/q}_{\bH^2}
\Big]\|\nabla\rho(\cdot,t)\|^2_{L^q}
\label{eq:lem:estimates3}
\end{eqnarray}
for all $t\in [0,T_1]$ with $T_1$ as is given on Lemma~\ref{lem:u_ut_w_wt_estimate}.
\end{lemma}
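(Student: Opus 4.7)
The strategy is to differentiate both momentum equations in time, test the resulting identities by $\bu_t$ and $\bw_t$ respectively, and use the continuity equation \eqref{eq:ecuacion_continuidad} to handle every time derivative of $\rho$ that appears. As a preliminary step, I use $\diver(\bu)=0$ together with $\rho_t+\bu\cdot\nabla\rho=0$, which together yield $\rho_t+\diver(\rho\bu)=0$, to rewrite \eqref{eq:momento_lineal} and \eqref{eq:momento_angular} in the non-conservative form
\begin{align*}
\rho\bu_t+\rho(\bu\cdot\nabla)\bu-(\mu+\mu_r)\Delta\bu+\nabla p&=2\mu_r\curl\bw+\rho f(\nabla h-\bm),\\
\rho\bw_t+\rho(\bu\cdot\nabla)\bw+L\bw&=2\mu_r\curl\bu+\rho g\bq.
\end{align*}
Differentiating in $t$, testing the first equation by $\bu_t$ (which is divergence-free since $\diver(\bu)=0$ for all $t$) and the second by $\bw_t\in\bH^1_0(\Omega)$, and summing, the combination $\int\rho\bu_{tt}\cdot\bu_t+\int\rho_t|\bu_t|^2$ reorganizes via $\rho_t=-\diver(\rho\bu)$ and one integration by parts into $\tfrac{1}{2}\tfrac{d}{dt}\|\sqrt{\rho}\bu_t\|^{2}_{\bL^2}+\int\rho(\bu\cdot\nabla)\bu_t\cdot\bu_t$ (the second term being moved to the right-hand side). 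The viscous/elliptic contributions produce $(\mu+\mu_r)\|\nabla\bu_t\|^{2}_{\bL^2}$ and, via the coercivity of $L$, a positive multiple of $\|\nabla\bw_t\|^{2}_{\bL^2}$; the pressure term vanishes by $\diver(\bu_t)=0$. This accounts for the left-hand side of \eqref{eq:lem:estimates3}.

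The right-hand side is bounded term by term. The convective piece expands as $(\rho(\bu\cdot\nabla)\bu)_t\cdot\bu_t=\rho_t(\bu\cdot\nabla)\bu\cdot\bu_t+\rho(\bu_t\cdot\nabla)\bu\cdot\bu_t+\rho(\bu\cdot\nabla)\bu_t\cdot\bu_t$, with an analogous expansion for $\bw$. After substituting $\rho_t=-\bu\cdot\nabla\rho$, I apply H\"older with exponents $(q,2q/(q-2),2)$, the Gagliardo--Nirenberg inequality \eqref{eq:gagnir} to control $\|\nabla\bu\|_{\bL^{2q/(q-2)}}$ by $\|\nabla\bu\|^{1-3/q}_{\bL^2}\|\bu\|^{3/q}_{\bH^2}$, Poincar\'e \eqref{eq:poi}, and Lemmas~\ref{lema:aprioriestimatesforrho}--\ref{lem:u_ut_w_wt_estimate}. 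A Young inequality with dual exponents $(q/3,q/(q-3))$ then isolates the $\|\bu\|^{6/q}_{\bH^2}\|\nabla\rho\|^{2}_{\bL^q}$-term that becomes $\Uppsi_4$, and a second Young split produces the $\|\nabla\rho\|^{4q/(3q-6)}_{\bL^q}\|\sqrt{\rho}\bu_t\|^{2}_{\bL^2}$ contribution to $\Uppsi_1$. The coupling contributions $\int\curl\bw_t\cdot\bu_t$, $\int\curl\bu_t\cdot\bw_t$ and $\int\bw_t\cdot\bw_t$ are controlled by elementary Young with small parts absorbed into $\|\nabla\bu_t\|^{2}_{\bL^2}+\|\nabla\bw_t\|^{2}_{\bL^2}$ on the left.

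For the external forces, I differentiate the linear forcing $(\rho f(\nabla h-\bm))_t=f'\rho(\nabla h-\bm)+f\rho_t(\nabla h-\bm)+f\rho(\nabla h_t-\bm_t)$ and test by $\bu_t$. The $f'$-contribution gives $\Uppsi_3|f'(t)|^2$ after Cauchy--Schwarz and estimate \eqref{hx_and_rx_estimate}. The $\rho_t$-contribution, after $\rho_t=-\bu\cdot\nabla\rho$, H\"older, Gagliardo--Nirenberg, the embedding \eqref{eq:h2linf}, and the bound \eqref{h_and_r_estimate}, supplies the $\max\{\|h\|_{\bH^2},1\}\|\nabla\rho\|^{2}_{\bL^q}|f(t)|^2$ part of $\Uppsi_2$. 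The $\nabla h_t$-contribution, via Cauchy--Schwarz with an $\epsilon$-weight and \eqref{ht_and_rt_estimate}, gives the $(\|\nabla h_t\|_{\bL^2}+1)^{2}\|\sqrt{\rho}\bu_t\|^{2}_{\bL^2}$ piece of $\Uppsi_1$. The treatment of $(\rho g\bq)_t\cdot\bw_t$ is entirely analogous but simpler since no $h$-related elliptic regularity is invoked.

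The main obstacle is the bookkeeping of the H\"older/Young exponents: I need (i) every coefficient of $\|\nabla\bu_t\|^{2}_{\bL^2}+\|\nabla\bw_t\|^{2}_{\bL^2}$ appearing on the right to be strictly smaller than the corresponding coefficient on the left so that it can be absorbed, (ii) the dependence on $(f,g,f',g')$ to be isolated exactly into the bracketed factors visible in the statement so that the constants $\Uppsi_i$ remain independent of $f$ and $g$, and (iii) the precise exponents $4q/(3q-6)$ and $6/q$ to emerge, which is what forces the particular Young exponents $q/3$ and $q/(q-3)$ above; any alternative splitting either leaves uncontrollable cross terms or spoils the $f,g$-independence of the constants.
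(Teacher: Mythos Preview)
Your proposal is correct and follows essentially the same route as the paper: differentiate the non-conservative momentum equations in $t$, test with $\bu_t$ and $\bw_t$, decompose the right-hand side into the same seven groups (the $\curl$ coupling, the $f',g'$ pieces, the $f\nabla h_t$ and $g\bq_t$ pieces, the $\rho_t$-forcing pieces, the $\rho_t|\bu_t|^2$ pieces, the $\rho_t(\bu\cdot\nabla)\bu$ pieces, and the $\rho(\bu_t\cdot\nabla)\bu$ pieces), and estimate each one via H\"older, \eqref{eq:poi}, \eqref{eq:gagnir}, \eqref{eq:yung} and Lemmas~\ref{lema:aprioriestimatesforrho}--\ref{lem:u_ut_w_wt_estimate}, absorbing the small $\|\nabla\bu_t\|^2+\|\nabla\bw_t\|^2$ contributions into the left. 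The only cosmetic difference is that you rewrite $\tfrac12\int\rho_t|\bu_t|^2$ as $\int\rho(\bu\cdot\nabla)\bu_t\cdot\bu_t$ via integration by parts before estimating it, whereas the paper keeps it in the $\rho_t$-form; note also that the Young split actually needed for the $\Uppsi_4$ term is the standard quadratic one (exponent $2$), not $(q/3,q/(q-3))$, though this does not affect the argument.
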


\begin{proof}
Differentiating \eqref{eq:momento_lineal} 
and  \eqref{eq:momento_angular} with respect to $t$; 
testing the results by $\bu_t$ and  $\bw_t$,
respectively;
summing the resulting equations; and rearranging the terms we get
\begin{eqnarray}
&&\frac{1}{2}\frac{d}{dt}\int_{\Omega}\rho|\bu_t(\bx,t)|^2d\bx
+(\mu+\mu_r)\int_{\Omega}|\nabla \bu_t(\bx,t)|^2d\bx
+\frac{1}{2}\frac{d}{dt}\int_{\Omega}\rho| \bw_t(\bx,t)|^2d\bx
\nonumber\\&&
\quad
+(c_a+2c_d)\int_{\Omega}|\nabla \bw_t(\bx,t)|^2d\bx
+\int_{\Omega}\Big(\frac{1}{2}\rho(\bx,t)+4\mu_r\Big)|w_t(\bx,t)|^2d\bx
\nonumber\\&&
\quad
=2\mu_r\left[\int_{\Omega}\curl \bw_t(x,t) \cdot\bu_t(\bx,t)d\bx
+\int_{\Omega}\curl \bu_t(x,t)\cdot\bw_t(\bx,t)d\bx\right]
\nonumber\\&&
\quad\quad\quad
+\left[\int_{\Omega}f'(t)\Big(\rho(\nabla h-\bm)\cdot\bu_t\Big)(\bx,t)d\bx
+\int_{\Omega}g'(t)\Big(\rho \bq\cdot\bw_t\Big)(\bx,t)d\bx\right]
\nonumber\\&&
\quad\quad\quad
+\left[\int_{\Omega}f(t)\Big(\rho(\nabla h_t-\bm_t)\cdot\bu_t\Big)(\bx,t)d\bx
+\int_{\Omega}g(t)\Big(\rho\bq_t\cdot\bw_t\Big)(\bx,t)d\bx\right]
\nonumber\\&&
\quad\quad\quad
+\left[
\int_{\Omega}f(t)\Big(\rho_t(\nabla h-\bm)\cdot\bu_t\Big)(\bx,t)d\bx
+\int_{\Omega}g(t)\Big(\rho_t\bq \cdot\bw_t\Big)(\bx,t)d\bx
\right]
\nonumber\\&&
\quad\quad\quad
-\frac{1}{2}\left[\int_{\Omega}\Big(\rho_t |\bu_t|^2\Big)(\bx,t)d\bx
+\int_{\Omega}\Big(\rho_t |\bw_t|^2\Big)(\bx,t)d\bx\right]
\nonumber\\&&
\quad\quad\quad
-\left[
\int_{\Omega}\Big(\rho_t (\bu\cdot\nabla )\bu\cdot \bu_t\Big)(\bx,t)d\bx
+\int_{\Omega}\Big(\rho_t (\bu\cdot\nabla )\bw \cdot\bw_t\Big)(\bx,t)d\bx
\right]
\nonumber\\&&
\quad\quad\quad
-\left[
\int_{\Omega}\Big(\rho (\bu_t\cdot\nabla )\bu\cdot \bu_t\Big)(\bx,t)d\bx
+\int_{\Omega}\Big(\rho (\bu_t\cdot\nabla )\bw\cdot \bw_t\Big)(\bx,t)d\bx
\right]
\nonumber\\&&
\quad
= \sum_{i=0}^6 I_i,
\label{eq:lem:estimates3_full}
\end{eqnarray}
where $I_i$ for 
$i=0,\ldots,6$ are defined by the brackets $[\ldots]$.
Hence, the proof of \eqref{eq:lem:estimates3} is reduced to
get some bounds for 
each $I_i$ as will be specified below.

\vskip 2mm
\noindent
{\it Estimate for $I_0$.}
From Lemmas~\ref{lema:aprioriestimatesforrho}, \ref{lem:u_ut_w_wt_estimate}
and Young inequality, we find that $I_0$ can be bounded as follows
\begin{eqnarray}
I_0
&\le& 2\mu_r\left|\int_{\Omega}\Big(\curl \bw_t\cdot\bu_t\Big)(\bx,t)d\bx
+\int_{\Omega}\Big(\curl \bu_t\cdot\bw_t\Big)(\bx,t)d\bx\right|
\nonumber
\\
\quad
&\le& 2\mu_r\Big(\|\nabla \bw_t(\cdot,t)\|_{\bL^2}\|\bu_t(\cdot,t)\|_{\bL^2}
+\|\nabla \bu_t(\cdot,t)\|_{\bL^2}\|\bw_t(\cdot,t)\|_{\bL^2}\Big)
\nonumber
\\
\quad
&\le &
\hat{\epsilon}
\Big[\|\nabla \bu_t(\cdot,t)\|^2_{\bL^2}
+\|\nabla \bw_t(\cdot,t)\|^2_{\bL^2}\Big]
+\frac{\Upphi_0}{4\hat{\epsilon}}
\Big[\|(\sqrt{\rho} \bu_t)(\cdot,t)\|^2_{\bL^2}
+\|(\sqrt{\rho} \bw_t)(\cdot,t)\|^2_{\bL^2}\Big]
,
\label{eq:lem:estimates3_I0}
\end{eqnarray}
for all $t\in [0,T_1]$, where 
$\Upphi_0=(2\mu_r\alpha^{-1})^2$ and
$\hat{\epsilon}>0$.

\vskip 2mm
\noindent
{\it Estimate for $I_1$.}
By Lemmas~\ref{lema:aprioriestimatesforrho},
\ref{lema:aprioriestimatesforhandr} and \eqref{eq:yung}, we get that
\begin{eqnarray}
I_1
&\le&\left|\int_{\Omega}f'(t)\Big(\rho(\nabla h-\bm)\cdot\bu_t\Big)(\bx,t)d\bx
    +\int_{\Omega}g'(t)\Big(\rho \bq\cdot\bw_t\Big)(\bx,t)d\bx\right|
\nonumber
\\
\quad
&\le& \sqrt{\beta}\Big(
|f'(t)|\|(\nabla h-\bm)(\cdot,t)\|_{\bL^2}
\|\sqrt{\rho} \bu_t(\cdot,t)\|_{\bL^2}
+|g'(t)|\|\bq(\cdot,t)\|_{\bL^2}\|\sqrt{\rho} \bw_t(\cdot,t)\|_{\bL^2}\Big)
\nonumber
\\
\quad
&\le &
\Upphi_1
\Big(|f'(t)|^2+|g'(t)|^2
+
\|(\sqrt{\rho} \bu_t)(\cdot,t)\|^2_{\bL^2}
+
\|(\sqrt{\rho} \bw_t)(\cdot,t)\|^2_{\bL^2}\Big),
\label{eq:lem:estimates3_I1}
\end{eqnarray}
where
$
\Upphi_1=2^{-1}\sqrt{\beta}
\max\Big\{\Uppi_1+\|\bm(\cdot,t)\|_{\bL^2},\|\bq(\cdot,t)\|_{\bL^2}\Big\}.
$

\vskip 2mm
\noindent
{\it Estimate for $I_2$.}
By applying Lemma~\ref{lema:aprioriestimatesforrho} and \eqref{eq:yung}, 
we have that
\begin{eqnarray}
I_2
&\le &\left|\int_{\Omega}\Big(f\rho(\nabla h_t-\bm_t)\cdot\bu_t\Big)(\bx,t)d\bx
    +\int_{\Omega}\Big(g\rho \bq_t\cdot\bw_t\Big)(\bx,t)d\bx\right|
\nonumber
\\
\quad
&\le& \sqrt{\beta}\Big(
|f(t)|
\Big\{\|\nabla h_t(\cdot,t)\|_{\bL^2}+\|\bm_t(\cdot,t)\|_{\bL^2}\Big\}
\|\sqrt{\rho} \bu_t(\cdot,t)\|_{\bL^2}
+|g(t)|\|\bq_t(\cdot,t)\|_{\bL^2}\|\sqrt{\rho} \bw_t(\cdot,t)\|_{\bL^2}\Big),
\nonumber
\\
&\le&  
\Upphi_2
\Big(
|f(t)|^2+|g(t)|^2
+\max\Big\{\Big(\|\nabla h_t(\cdot,t)\|_{\bL^2}+1\Big)^2,1\Big\}
\Big[
\|\sqrt{\rho} \bu_t(\cdot,t)\|^2_{\bL^2}
+
\|\sqrt{\rho} \bw_t(\cdot,t)\|^2_{\bL^2}\Big]
\Big).
\label{eq:lem:estimates3_I2}
\end{eqnarray}
where
$
\Upphi_2=\sqrt{\beta}\;
\max\Big\{\|\bm_t(\cdot,t)\|_{\bL^2},\;
\|\bq_t(\cdot,t)\|_{\bL^2},\;1\Big\}.
$

\vskip 2mm
\noindent
{\it Estimate for $I_3$.}
By  equation \eqref{eq:ecuacion_continuidad},
inequalities \eqref{eq:poi} and \eqref{eq:gagnir},
Lemmas~\ref{lema:aprioriestimatesforrho} and
\ref{lem:u_ut_w_wt_estimate} and noticing  that
\begin{eqnarray*}
\|\nabla \rho(\cdot,t)\|_{\bL^3}& \le & |\Omega|^{(q-3)/3q}
\|\nabla \rho(\cdot,t)\|_{\bL^q}, \qquad q>3,
\\
\|(\nabla h-\bm)(\cdot,t)\|^2_{\bL^3}&\le&
C^2_{gn}
\|(\nabla h-\bm)(\cdot,t)\|_{\bL^2}\|(\nabla h-\bm)(\cdot,t)\|_{\bH^1}
\\
&\le& 
\Big(\Uppi_1+\|\bm(\cdot,t)\|_{\bL^2}\Big)
\Big(\|h(\cdot,t)\|_{H^2}
+\|\bm(\cdot,t)\|_{\bH^1}\Big)
\end{eqnarray*}
we deduce that
\begin{eqnarray}
I_3
&\le&\left|\int_{\Omega}f(t)\Big(\rho_t(\nabla h-\bm)\cdot\bu_t\Big)(\bx,t)d\bx
    +\int_{\Omega}g(t)\Big(\rho_t\bq\cdot\bw_t\Big)(\bx,t)d\bx\right|
\nonumber
\\
\quad
&=&\left|\int_{\Omega}f(t)\Big((\bu\cdot\nabla\rho)(\nabla h-\bm)\cdot\bu_t\Big)(\bx,t)d\bx
    +\int_{\Omega}g(t)\Big((\bu\cdot\nabla\rho)\bq\cdot\bw_t\Big)(\bx,t)d\bx\right|
\nonumber
\\
\quad
&\le& 
|f(t)|
\|\bu(\cdot,t)\|_{\bL^6}\|\nabla\rho(\cdot,t)\|_{\bL^3}
\|(\nabla h-\bm)(\cdot,t)\|_{\bL^3}\|\bu_t(\cdot,t)\|_{\bL^6}
\nonumber
\\&&
\qquad
+|g(t)|\|\bu(\cdot,t)\|_{\bL^6}\|\nabla\rho(\cdot,t)\|_{\bL^3}
\|\bq(\cdot,t)\|_{\bL^3}\|\bw_t(\cdot,t)\|_{\bL^6}
\nonumber
\\
\quad
&\le&
(C_{poi})^2
\Uptheta(t)|\Omega|^{(q-3)/3q}
\Big\{
|f(t)|\|\nabla\rho(\cdot,t)\|_{\bL^q}
\|(\nabla h-\bm)(\cdot,t)\|_{\bL^3}\|\nabla\bu_t(\cdot,t)\|_{\bL^2}
\nonumber
\\&&
\qquad
+|g(t)|\|\nabla\rho(\cdot,t)\|_{\bL^3}
\|\bq(\cdot,t)\|_{\bL^3}\|\nabla\bw_t(\cdot,t)\|_{\bL^2}\Big\}
\nonumber
\\
\quad
&\le&
\hat{\epsilon}\Big[\|\nabla \bu_t(\cdot,t)\|^2_{\bL^2}+\|\nabla \bw_t(\cdot,t)\|^2_{\bL^2} \Big]
+
\frac{\Big((C_{poi})^2\Uptheta(t)|\Omega|^{(q-3)/3q} \Big)^2}{4\hat{\epsilon}}
\times
\nonumber
\\
&&
\qquad\qquad
\Big\{
|f(t)|^2
\|\nabla\rho(\cdot,t)\|^2_{\bL^p}
\|(\nabla h-\bm)(\cdot,t)\|^2_{\bL^2}
+
|g(t)|^2
\|\nabla\rho(\cdot,t)\|^2_{\bL^p}
\|\bq(\cdot,t)\|^2_{\bL^2}
\Big\}
\nonumber
\\
\quad
&\le&
\hat{\epsilon}\Big[\|\nabla \bu_t(\cdot,t)\|^2_{\bL^2}+\|\nabla \bw_t(\cdot,t)\|^2_{\bL^2} \Big]
+
\Upphi_3
\max\Big\{ \|h\|_{H^2}+1,1\Big\}
\|\nabla\rho(\cdot,t)\|^2_{\bL^p}
\Big [|f(t)|^2+|g(t)|^2\Big ],
\label{eq:lem:estimates3_I3}
\end{eqnarray}
where $\hat{\epsilon}>0$ and
\begin{eqnarray*}
 \Upphi_3=
\Big((C_{poi})^2\Uptheta(t)|\Omega|^{(q-3)/3q} \Big)^2
\max\Big\{\Uppi_1+\|\bm(\cdot,t)\|_{\bL^2},
(\Uppi_1+\|\bm(\cdot,t)\|_{\bL^2})\|\bm(\cdot,t)\|_{\bL^2},
\|\bq(\cdot,t)\|_{\bL^2}\|\bq(\cdot,t)\|_{\bH^1}\Big\},
\end{eqnarray*}
for all $t\in[0,T_1]$.

\vskip 2mm
\noindent
{\it Estimate for $I_4$.}
It  can be bounded by the application of
equation \eqref{eq:ecuacion_continuidad}, \eqref{eq:poi}, \eqref{eq:gagnir}
and Lemma~\ref{lem:u_ut_w_wt_estimate}, since
we can perform the following calculus
\begin{eqnarray}
I_4
&\le& \frac{1}{2}\left|\int_{\Omega}\Big(\rho_t |\bu_t|^2\Big)(\bx,t)d\bx
    +\int_{\Omega}\Big(\rho_t |\bw_t|^2\Big)(\bx,t)d\bx\right|
\nonumber
\\
\quad
&=&\frac{1}{2}\left|\int_{\Omega}\Big((\bu\cdot\nabla\rho) |\bu_t|^2\Big)(\bx,t)d\bx
    +\int_{\Omega}\Big((\bu\cdot\nabla\rho) |\bw_t|^2\Big)(\bx,t)d\bx\right|
\nonumber
\\
\quad
&\le&
\|\bu(\cdot,t)\|_{\bL^6}\|\nabla\rho(\cdot,t)\|_{\bL^q}
\|\bu_t(\cdot,t)\|_{\bL^{2q/(q-2)}}
\|\bu_t(\cdot,t)\|_{\bL^3}
\nonumber
\\&&
\qquad
+\|\bu(\cdot,t)\|_{\bL^6}\|\nabla\rho(\cdot,t)\|_{\bL^q}
\|\bw_t(\cdot,t)\|_{\bL^{2q/(q-2)}}
\|\bw_t(\cdot,t)\|_{\bL^3}
\nonumber
\\
\quad
&\le&
\frac{(C_{poi})^2\Uptheta(t)}{2}\Big\{
\|\nabla\rho(\cdot,t)\|_{\bL^q}\|\bu_t(\cdot,t)\|_{\bL^{2q/(q-2)}}
\|\bu_t(\cdot,t)\|_{\bL^3}
\nonumber
\\&&
\qquad
+\|\nabla\rho(\cdot,t)\|_{\bL^q}\|\bw_t(\cdot,t)\|_{\bL^{2q/(q-2)}}
\|\bw_t(\cdot,t)\|_{\bL^3}
\Big\}
\nonumber
\\
\quad
&\le&
\frac{(C_{poi}C_{gn})^2\Uptheta(t)}{2}
 \Big\{
\|\nabla\rho(\cdot,t)\|_{\bL^q}
\|\bu_t(\cdot,t)\|^{1-3/q}_{\bL^2}\|\nabla \bu_t(\cdot,t)\|^{3/q}_{\bL^2}
\|\bu_t(\cdot,t)\|^{1/2}_{\bL^2}\|\nabla \bu_t(\cdot,t)\|^{1/2}_{\bL^2}
\nonumber
\\&&
\qquad
+\|\nabla\rho(\cdot,t)\|_{\bL^q}
\|\bw_t(\cdot,t)\|^{1-3/q}_{\bL^2}\|\nabla \bw_t(\cdot,t)\|^{3/q}_{\bL^2}
\|\bw_t(\cdot,t)\|^{1/2}_{\bL^2}\|\nabla \bw_t(\cdot,t)\|^{1/2}_{\bL^2}
\Big\}
\nonumber
\\
&\le&
\hat{\epsilon}\Big[
\|\nabla \bu_t(\cdot,t)\|^{2}_{\bL^2}+\|\nabla \bw_t(\cdot,t)\|^{2}_{\bL^2}
\Big]
+
\Upphi_4
\Big[
\|(\sqrt{\rho}\bu_t)(\cdot,t)\|^{2}_{\bL^2}
+\|(\sqrt{\rho}\bw_t)(\cdot,t)\|^{2}_{\bL^2}
\Big]\|\nabla\rho(\cdot,t)\|^{4q/(3q-6)}_{\bL^q}
\label{eq:lem:estimates3_I4}
\end{eqnarray}
where $q>3$,
$\hat{\epsilon}>0$ and
$\Upphi_4=
\left(2^{-1}(C_{poi}C_{gn})^2\Uptheta(t)\right)^{4q/(3q-6)} \hat{C}_{\hat{\epsilon}}
$
with $\hat{C}_{\hat{\epsilon}}$ defined in \eqref{eq:poi} for 
the conjugate values $4q/(6+q)$ and $4q/(3q-6)$ instead of $p$
and $q$, respectively.

\vskip 2mm
\noindent
{\it Estimate for $I_5$.}
An application of
equation \eqref{eq:ecuacion_continuidad}, 
\eqref{eq:poi}, \eqref{eq:gagnir} 
and Lemma~\ref{lem:u_ut_w_wt_estimate} implies 
the following bound for~$I_5$
\begin{eqnarray}
I_5
&\le& \left|\int_{\Omega}\Big(\rho_t (\bu\cdot\nabla)\bu\cdot \bu_t\Big)(\bx,t)d\bx
    +\int_{\Omega}\Big(\rho_t\bu\cdot\nabla)\bw \cdot\bw_t\Big)(\bx,t)d\bx\right|
\nonumber
\\
\quad
&=&\left|\int_{\Omega}\Big((\bu\cdot\nabla\rho)  (\bu\cdot\nabla)\bu\cdot\bu_t\Big)(\bx,t)d\bx
    +\int_{\Omega}\Big((\bu\cdot\nabla\rho) (\bu\cdot\nabla)\bw\cdot\bw_t\Big)(\bx,t)d\bx\right|
\nonumber
\\
\quad
&\le&
\|\bu(\cdot,t)\|^2_{\bL^6}\|\bu_t(\cdot,t)\|_{\bL^6}\|\nabla\rho(\cdot,t)\|_{\bL^q}
\|\nabla \bu(\cdot,t)\|_{\bL^{2q/(q-2)}}
\nonumber
\\&&
\qquad
+\|\bu(\cdot,t)\|^2_{L^6}\|\bw_t(\cdot,t)\|_{\bL^6}\|\nabla\rho(\cdot,t)\|_{\bL^q}
\|\nabla \bw(\cdot,t)\|_{\bL^{2q/(q-2)}}
\nonumber
\\
\quad
&\le&
C^3_{poi}\Uptheta(t)^2 C_{gn}\Big\{
\|\nabla \bu_t(\cdot,t)\|_{\bL^2}\|\nabla\rho(\cdot,t)\|_{\bL^q}
\|\nabla \bu(\cdot,t)\|^{1-3/q}_{\bL^2}\|\bu(\cdot,t)\|^{3/q}_{\bH^2}
\nonumber
\\&&
\qquad
+\|\nabla \bw_t(\cdot,t)\|_{\bL^2}\|\nabla\rho(\cdot,t)\|_{\bL^q}
\|\nabla \bw(\cdot,t)\|^{1-3/q}_{\bL^2}\|\bw(\cdot,t)\|^{3/q}_{\bH^2}
\Big\}
\nonumber\\
&\le&
C^3_{poi}[\Uptheta(t)]^{3(q-1)/q} C_{gn}
\Big\{
\|\nabla \bu_t(\cdot,t)\|_{\bL^2}\|\nabla\rho(\cdot,t)\|_{\bL^q}
\|\bu(\cdot,t)\|^{3/q}_{\bH^2}
\nonumber\\
&&
\qquad
+
\|\nabla \bw_t(\cdot,t)\|_{\bL^2}\|\nabla\rho(\cdot,t)\|_{\bL^q}
\|\bw(\cdot,t)\|^{3/q}_{\bH^2}\Big\}
\nonumber\\
\quad&\le&
\hat{\epsilon}
\Big[
\|\nabla \bu_t(\cdot,t)\|^{2}_{\bL^2}+\|\nabla \bw_t(\cdot,t)\|^{2}_{\bL^2}
\Big]
+\frac{\Upphi_5}{4\hat{\epsilon}}
\Big[
\|\bu(\cdot,t)\|^{6/q}_{\bH^2}+\|\bw(\cdot,t)\|^{6/q}_{\bH^2}
\Big]
\|\nabla\rho(\cdot,t)\|^2_{\bL^q}
,
\label{eq:lem:estimates3_I5}
\end{eqnarray}
where $\Upphi_5=\Big(C^3_{poi}[\Uptheta(t)]^{3(q-1)/q} C_{gn}\Big)^2$
with $\hat{\epsilon}>0$.

\vskip 2mm
\noindent
{\it Estimate for $I_6$.}
By inequalities \eqref{eq:poi}, \eqref{eq:gagnir},
Lemma~\ref{lema:aprioriestimatesforrho} and
Lemma~\ref{lem:u_ut_w_wt_estimate} we deduce that
\begin{eqnarray}
I_6
&\le&\left|\int_{\Omega}\Big(\rho (\bu_t\cdot\nabla)\bu\cdot \bu_t\Big)(\bx,t)d\bx
    +\int_{\Omega}\Big(\rho (\bu_t\cdot\nabla)\bw \cdot\bw_t\Big)dx\right|
\nonumber
\\
\quad
&\le&
\|\rho(\cdot,t)\|_{L^\infty}\|\nabla \bu(\cdot,t)\|_{\bL^2}
\| \bu_t(\cdot,t)\|_{L^3}\| \bu_t(\cdot,t)\|_{\bL^6}
+\|\rho(\cdot,t)\|_{L^\infty}\|\nabla \bu(\cdot,t)\|_{\bL^2}
\| \bw_t(\cdot,t)\|_{\bL^3}\| \bw_t(\cdot,t)\|_{\bL^6}
\nonumber
\\
\quad
&\le&
\beta C_{poi}C_{gn}\Uptheta(t) \Big\{
\| \bu_t(\cdot,t)\|^{1/2}_{\bL^2}\| \nabla \bu_t(\cdot,t)\|^{3/2}_{\bL^2}
+\| \bu_t(\cdot,t)\|^{1/2}_{\bL^2}
\| \nabla\bu_t(\cdot,t)\|^{1/2}_{\bL^2}
\| \nabla \bw_t(\cdot,t)\|_{\bL^2}
\Big\}
\nonumber
\\
\quad&\le&
2\hat{\epsilon}
\Big[\|\nabla \bu_t(\cdot,t)\|^{2}_{\bL^2}
+
\|\nabla \bw_t(\cdot,t)\|^{2}_{\bL^2}\Big]
+
\frac{1}{4\alpha}
\left(
\frac{3\sqrt{2}(\Upphi_6)^{4/3}}{\sqrt[3]{\hat{\epsilon}}}
+
\frac{(\Upphi_6)^{4}}{16\hat{\epsilon}^2}
\right)
\|(\sqrt{\rho}\bu_t)(\cdot,t)\|^{2}_{\bL^2},
\qquad
\label{eq:lem:estimates3_I6}
\end{eqnarray}
where $\Upphi_6=\beta C_{poi}C_{gn}\Uptheta(t) $
with $\hat{\epsilon}>0$.

Inserting \eqref{eq:lem:estimates3_I1}-\eqref{eq:lem:estimates3_I6} in
\eqref{eq:lem:estimates3_full}, selecting 
$\hat{\epsilon}\in ]0, 6^{-1}\min\{\mu+\mu_r,c_a+2c_d\}[$
and defining
\begin{eqnarray*}
&&\Uppsi_1 =\frac{1}{\mathcal{L}}
\max\left\{\frac{\Upphi_0}{4\hat{\epsilon}}+\Upphi_1
+\frac{1}{4\alpha}
\left(
\frac{3\sqrt{2}(\Upphi_6)^{4/3}}{\sqrt[3]{\hat{\epsilon}}}
+
\frac{(\Upphi_6)^{4}}{16\hat{\epsilon}^2}
\right),
\;
\Upphi_2,
\;
\Upphi_4\right\}
\\
&&
\Uppsi_2 =\frac{1}{\mathcal{L}}\max\Big\{\Upphi_2,\;\Upphi_3\},
\qquad
\Uppsi_3 =\frac{\Upphi_1}{\mathcal{L}},
\qquad
\Uppsi_4 =\frac{\Upphi_1}{4\hat{\epsilon}\mathcal{L}},
\end{eqnarray*}
with $\mathcal{L}=\min\{2^{-1},\mu+\mu_r-\hat{\epsilon},c_a+2c_d-\hat{\epsilon}\}$,
we can deduce that \eqref{eq:lem:estimates3} is satisfied.
\end{proof}

\subsection{Proof of Theorem~\ref{teo:global_estimates}}
\label{subsec:proof_of_teo:global_estimates}

We note that \eqref{eq:lema:aprioriestimatesforrho} is clearly valid by 
Lemma~\ref{lema:aprioriestimatesforrho} and 
the existence of $T_1$ and $\upkappa_1$ follows from 
\eqref{lem:u_ut_w_wt_main_estimate}.  Now, before starting the
proof of \eqref{teo:global_estimates_kapa2}-\eqref{teo:global_estimates_kapa14},
we deduce two estimates. First, differentiating \eqref{eq:ecuacion_continuidad}
with respect to $x_i$, using \eqref{eq:incompresibilidad}, testing the result 
by $|\rho_{x_i}|^{q-2}\rho_{x_i}$ and applying the Sobolev inequality we 
deduce that there exists $C_{sob}$ independent of $f$ and $g$ such that
\begin{eqnarray}
\frac{d}{dt}\|\nabla\rho(\cdot,t)\|^q_{\bL^q}\le C_{sob} 
\|\bu(\cdot,t)\|_{\bW^{2,s}}\|\nabla\rho(\cdot,t)\|_{\bL^q},\quad
\mbox{for $t\in [0,T_*]$.}
\label{eq:sobolev}
\end{eqnarray}
Second, by the regularity of the solutions for \eqref{eq:stokes_problem}
we have that there exists $C^{reg}_3$ depending only on $\mu,\mu_r$ and
$\Omega$ such that
\begin{eqnarray*}
\|\bu(\cdot,t)\|_{\bW^{2,s}}+\|p(\cdot,t)\|_{\bW^{1,s}}
\le C^{reg}_3\; \Big\|\Big(2\mu_r \curl \bw
+\rho f(\nabla h-\bm)
-\rho \bu_t-\rho (\bu\cdot\nabla) \bu\Big)(\cdot,t)\Big\|_{\bL^s},
\end{eqnarray*} 
for $s\in [2,\infty[$. 
We note that for $s\in [2,6]$ the inequalities
 \eqref{eq:poi} and  \eqref{eq:gagnir} can be applied.
Hence, by the Minkowski and H\"older inequalities, \eqref{eq:poi} and  \eqref{eq:gagnir},
we find that there exists 
$\upxi_1=2\mu_rC^{reg}_3C_{gn},$
$\upxi_2=\beta C^{reg}_3\max\{C_{gn},\|\bm(\cdot,t)\|_{\bL^2}\}$, 
$\upxi_3=C^{reg}_3C_{gn}C^{2,\infty}_{iny}$ and 
$\upxi_4=\beta C^{reg}_3C_{poi}$, such that
\begin{eqnarray}
\|\bu(\cdot,t)\|_{\bW^{2,s}}+\|p(\cdot,t)\|_{W^{1,s}}
&\le 
\upxi_1\|\bw(\cdot,t)\|_{\bH^2}
+\upxi_2|f(t)|\Big(\|h(\cdot,t)\|_{H^2}+1\Big)
\nonumber\\
&
+\upxi_3\|\bu(\cdot,t)\|^2_{\bH^2}
+\upxi_4 \|\nabla \bu_t(\cdot,t)\|_{\bL^2},
\quad
s\in [2,6],
\label{eq:w2s_regularity}
\end{eqnarray}
for $t\in [0,T_*]$.
Therefore, we derive the proof of  \eqref{teo:global_estimates_kapa2} by
inserting \eqref{eq:w2s_regularity} in \eqref{eq:sobolev} and using the estimates
\eqref{h_and_r_estimate}, \eqref{ht_and_rt_estimate} and \eqref{eq:lem:estimates3}. 
The estimate \eqref{teo:global_estimates_kapa3_4} is deduced
from \eqref{teo:global_estimates_kapa2} and
\eqref{h_and_r_estimate}.
The inequality \eqref{teo:global_estimates_kapa5_8} is obtained 
from \eqref{teo:global_estimates_kapa2}, \eqref{eq:uppi_cuatro}, \eqref{eq:uppi_cinco},
\eqref{ht_and_rt_estimate} and \eqref{eq:regularity_u_p_w}.
The estimate \eqref{teo:global_estimates_kapa9_10} is proved by the application of 
\eqref{teo:global_estimates_kapa1}, \eqref{eq:uppi_cuatro}, \eqref{eq:uppi_cinco} and
\eqref{eq:regularity_u_p_w}.
The estimate \eqref{teo:global_estimates_kapa11_13} follows
from \eqref{eq:ecuacion_continuidad}, \eqref{teo:global_estimates_kapa1}
and \eqref{teo:global_estimates_kapa9_10}.
We complete the proof of the theorem deducing the inequality
\eqref{teo:global_estimates_kapa14} 
by combining the results given on
\eqref{teo:global_estimates_kapa2} and \eqref{eq:w2s_regularity}.

\section*{Acknowledgment}
We acknowledge the support of the research projects 
DIUBB GI 172409/C, DIUBB 183309 4/R and FAPEI at
Universidad del B{\'\i}o-B{\'\i}o (Chile); the 
Fondecyt project 1120260; the project MTM 2012-32325 (Spain);
and CONICYT (Chile) through the program ``Becas de Doctorado''




\section*{References}

\end{document}